\newtheorem{teo}{Theorem}[section]
\newtheorem{lema}{Lemma}[section]
\newtheorem{obs}{Remark}[section]
\newcommand{\dee}{\mathop{\mathrm{d}\!}}
\newcommand\be{\begin{equation}}
\newcommand\ee{\end{equation}}
\DeclareMathOperator\erf{erf}
\DeclareMathOperator\erfc{erfc}
\begin{document}

\title{Existence and uniqueness of the p-generalized modified error function}
\author{
Julieta Bollati$^{1,2}$, Jos\'e A. Semitiel$^{2}$, Mar\'ia F. Natale$^{2}$,\\
Domingo A. Tarzia $^{1,2}$ \\
\small {{$^1$} CONICET, Argentina} \\
\small {{$^2$} Depto. Matem\'atica, FCE, Univ. Austral, Paraguay 1950} \\
\small {S2000FZF Rosario, Argentina.}\\
\small{Email: jbollati@austral.edu.ar, jsemitiel@austral.edu.ar,}\\ \small{fnatale@austral.edu.ar, dtarzia@austral.edu.ar }
}
\date{}

\maketitle
\abstract{
In this paper,  the p-generalized modified error function is defined as the solution to a non-linear ordinary differential problem of second order with a Robin type condition at $x=0$.
Existence and uniqueness of a non-negative $C^{\infty}$  solution is proved by using a fixed point strategy. It is shown that the p-generalized modified error function converges to the p-modified error function defined as the  solution to a similar problem with a Dirichlet condition at $x=0$.
In both problems, for $p=1$, the generalized modified error function and the modified error function, studied recently in literature, are recovered. In addition, existence and uniqueness of solution to a problem with a Neumann condition is also analysed.
}

\noindent\textbf{Keywords:} Modified error function,
Generalized modified error function, Nonlinear ordinary differential equation, Banach fixed point theorem, Stefan problem.

\smallskip

\noindent\textbf{AMS Classification}: 34A34, 47H10, 33E30, 34A12, 35R35.

\section{Introduction.}

In \cite{CeSaTa18}, it was studied a fusion Stefan problem with variable thermal conductivity and a Robin boundary condition at the fixed face $x=0$
given by:

\begin{align}
& \rho c \frac{\partial T}{\partial t}=\frac{\partial}{\partial x} \left(k(T)\frac{\partial T}{\partial x}\right),& 0<x<s(t), \quad t>0, \label{EcCalor}\\
& k(T(0,t)) \frac{\partial T}{\partial x}(0,t)=\frac{h}{\sqrt{t}}\left[T(0,t)-T_{_0}\right], &t>0, \label{CondBordeConv}\\
&  T(s(t),t)=T_{_f}, &t>0, \label{TempCambioFase}\\
&  k\left(T(s(t),t)\right)\frac{\partial T}{\partial x}(s(t),t)=-\rho l \dot s(t), &t>0, \label{CondStefan}\\
& s(0)=0,\label{FrontInicial}
\end{align}
where the unknown functions are the temperature $T$ and the free boundary $s$ separating both phases. The parameters $\rho>0$ (density), $l>0$ (latent heat per unit mass), $T_f$ (phase-change temperature),  $T_{0}>T_f$ (bulk temperature), $h>0$ (coefficient that characterizes the heat transfer at $x=0$) and $c$ (specific heat) are all known constants. The problem  (\ref{EcCalor})-(\ref{FrontInicial})  is a phase-change problem known in the literature as a Stefan problem. It corresponds to the melting of a semi-infinite material which is initially solid at the phase-change temperature $T_f$. As $T_0 >T_f$, a phase-change interface $x=s(t)$, $t>0$ is beginning at $t=0$ with the initial position $s(0)=0$. Then, the temperature of the liquid phase is $T=T(x,t)$ defined in the domain $0<x<s(t)$, $t>0$, and the temperature of the solid phase is $T=0$ defined in the domain $x>s(t),t>0$.

 In  \cite{ChSu74},  the thermal conductivity $k$ is defined as :
\begin{align}
&k(T)=k_{0}\left(1+\delta\left(\frac{T-T_{f}}{T_{0}-T_f}\right)\right)\label{kAndre},
\end{align}
where $\delta$ is a given positive constant and $k_{0}$ is the reference thermal conductivity.
The existence of solution to problem (\ref{EcCalor})-(\ref{FrontInicial}) when the thermal conductivity $k(T)$ is defined by (\ref{kAndre}) has been proved through the existence of  what the authors  in \cite{CeSaTa18} called a {\em generalized modified error function} (GME), which is defined as the solution to the following ordinary differential problem
\begin{subequations}\label{Pb:Chicas}
\begin{align}
\label{eq:y}&[(1+\delta y(x))y'(x)]'+2xy'(x)=0\quad 0<x<+\infty\\
\label{cond:0}&\left(1+\delta y(0)\right)y'(0)-\gamma y(0)=0\\
\label{cond:infty}&y(+\infty)=1
\end{align}
\end{subequations}
where
\begin{equation}
\gamma=2\mathrm{Bi},\qquad
 \mathrm{with\qquad Bi}=\frac{h \sqrt{\alpha_0}}{k_0} \quad
 \mathrm{(generalized\, Biot\, number)}
\end{equation}
and
\begin{equation}
\alpha_0=\frac{k_0}{\rho c} \quad \mathrm{(thermal\, diffusivity)}.
\end{equation}

The solution of the problem  (\ref{EcCalor})-(\ref{FrontInicial}) is given as a function of the solution of the ordinary differential  problem (\ref{Pb:Chicas}) through the similarity variable $\frac{x}{2\sqrt{\alpha_0t}}$, see  \cite{CeSaTa18,ChSu74, OlSu87}. More explanation is given in \cite{AlSo,Gu,Ta11}

Motivated by \cite{KuRa18} we define a generalized thermal conductivity as:
\begin{align}
&k(T)=k_{0}\left(1+\delta\left(\frac{T-T_{f}}{T_{0}-T_f}\right)^p\right), \quad p \geq 1\label{k}.
\end{align}

Therefore, the existence of solution to problem (\ref{EcCalor})-(\ref{FrontInicial}) with $k$ given by (\ref{k}) will be studied through the  {\em p-generalized modified error function} (p-GME) which we define as  the solution to the following nonlinear differential problem:
\begin{subequations}\label{Pb:c}
\begin{align}
\label{eq:y}&[(1+\delta y^p(x))y'(x)]'+2xy'(x)=0\quad 0<x<+\infty\\
\label{cond:0}&\left(1+\delta y^p(0)\right)y'(0)-\gamma y(0)=0\\
\label{cond:infty}&y(+\infty)=1
\end{align}
\end{subequations}

Observe that in case  $p=1$ we recover the problem studied in \cite{CeSaTa17,CeSaTa18} originally defined in \cite{ChSu74,OlSu87}. Others papers in the subject for $p=1$ can be found  in \cite{Bo17, SaTa11}. In that sense, the p-GME function constitutes a mathematical generalization of the GME function.

With the purpose to prove existence and uniqueness of the p-GME function, i.e. a solution to problem (\ref{Pb:c}), in Section 2 we  define a  convenient contracting mapping.

In Section 3,   we study the asymptotic behaviour of the p-GME function when $\gamma\to\infty$. We will show that this function converges to the solution to an ordinary differential problem that arises by changing the Robin condition at $x=0$ \cite{CaJa59} by a Dirichlet one.

Finally, in Section 4 we change the Robin condition by a Neumann one in a solidification process and analyse the existence and uniqueness of a new ordinary differential problem.

In conclusion, the aim of this paper is to prove existence and uniqueness of solution to three ordinary differential problems that have been motivated from Stefan problems, imposing different boundary conditions at the fixed face $x=0$: Robin, Dirichlet and Neumann.

\section{Existence and uniqueness of solution to the p-GME function}\label{Sec:EyU}

Let us define:



\begin{align}
\label{X}&X=\left\{h:\mathbb{R}_0^+\to\mathbb{R}\,/\,h\text{ is a continuous and bounded real-valued function}\right\}\\
\label{K}&K=\left\{h\in X\,/\, ||h||_{\infty}\leq 1,\,0 \leq h,\,h(+\infty)=1\right\}.
\end{align}

We remark that $K$ is a non-empty closed convex and bounded subset of the Banach space $X$ by the usual norm
\begin{equation*}
  ||h||_{\infty}=\sup\limits_{x\in\mathbb{R}_0^+}\vert h(x)\vert<\infty,
\end{equation*}

 e.g.  \cite{Dh09} (page 2487), \cite{Dh10} (page 152), \cite{My48} (page 132) .


In this Section we will prove existence and uniqueness of the p-GME function (problem (\ref{Pb:c})) by using the Banach fixed point theorem. First, we will show that the ordinary differential problem  (\ref{Pb:c}) becomes equivalent to an integral equation.
We consider that  $\gamma$ is a parameter for problem (\ref{Pb:c}) and in Section 3 we will  study the  asymptotic behaviour when $\gamma\to\infty$.

\begin{teo}\label{Th:caract1}
Let  $\delta\geq 0$, $\gamma>0$, $p\geq 1$. For each $\gamma>0$, the function $y_\gamma\in K$ is a solution to problem (\ref{Pb:c}) if and only if $y_\gamma$  is a fixed point of the  operator $T_{\gamma}: K \rightarrow K$  given  by:
\begin{equation}\label{op:T}
T_{\gamma}(h)(x)=\frac{1+\gamma \displaystyle\int_0^{x} f_h(\eta) \mathrm{d}\eta}{1+\gamma \displaystyle\int_0^{\infty} f_h(\eta) \mathrm{d}\eta},\qquad x\geq 0,
\end{equation}
with
\begin{equation}\label{fh}
f_h(x)=\frac{1}{\Psi_h(x)}\exp\left(-2\displaystyle\int_0^x\frac{\xi}{\Psi_h(\xi)}\mathrm{d}\xi\right),\qquad \Psi_h(x)=1+\delta h^{p}(x).
\end{equation}
\end{teo}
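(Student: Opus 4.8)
The plan is to establish the two implications of the equivalence by reducing the second-order equation (\ref{eq:y}) to a first-order \emph{linear} equation in the single unknown $y_\gamma'$, integrating it explicitly with an integrating factor, and then using the two boundary conditions (\ref{cond:0}) and (\ref{cond:infty}) to pin down the two resulting constants of integration. The function $f_h$ in (\ref{fh}) is precisely the fundamental solution of this reduced first-order problem, so the operator $T_\gamma$ of (\ref{op:T}) should emerge automatically rather than having to be guessed.

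For the forward implication, I would assume $y_\gamma\in K$ solves (\ref{Pb:c}) and write $\Psi_{y_\gamma}(x)=1+\delta y_\gamma^p(x)$. Introducing the auxiliary function $v(x)=\Psi_{y_\gamma}(x)\,y_\gamma'(x)$, equation (\ref{eq:y}) becomes $v'(x)=-2x\,y_\gamma'(x)=-2x\,v(x)/\Psi_{y_\gamma}(x)$, a separable first-order linear ODE whose solution is $v(x)=v(0)\exp\!\left(-2\int_0^x \xi/\Psi_{y_\gamma}(\xi)\,\mathrm{d}\xi\right)$. The Robin condition (\ref{cond:0}) gives $v(0)=\Psi_{y_\gamma}(0)y_\gamma'(0)=\gamma y_\gamma(0)$, whence $y_\gamma'(x)=v(x)/\Psi_{y_\gamma}(x)=\gamma\,y_\gamma(0)\,f_{y_\gamma}(x)$. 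Integrating from $0$ to $x$ yields $y_\gamma(x)=y_\gamma(0)\bigl(1+\gamma\int_0^x f_{y_\gamma}(\eta)\,\mathrm{d}\eta\bigr)$, and letting $x\to+\infty$ together with (\ref{cond:infty}) fixes $y_\gamma(0)=\bigl(1+\gamma\int_0^\infty f_{y_\gamma}(\eta)\,\mathrm{d}\eta\bigr)^{-1}$. Substituting back gives exactly $y_\gamma=T_\gamma(y_\gamma)$.

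For the converse, suppose $y_\gamma\in K$ is a fixed point. I would differentiate (\ref{op:T}) to get $y_\gamma'(x)=\gamma f_{y_\gamma}(x)/\bigl(1+\gamma\int_0^\infty f_{y_\gamma}(\eta)\,\mathrm{d}\eta\bigr)$, and then observe that $f_{y_\gamma}$ satisfies $[\Psi_{y_\gamma}f_{y_\gamma}]'+2x f_{y_\gamma}=0$ by construction, since $\Psi_{y_\gamma}f_{y_\gamma}=\exp(-2\int_0^x \xi/\Psi_{y_\gamma}(\xi)\,\mathrm{d}\xi)$; this makes $y_\gamma$ satisfy (\ref{eq:y}). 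Evaluating at $x=0$, where $f_{y_\gamma}(0)=1/\Psi_{y_\gamma}(0)$, recovers the Robin condition (\ref{cond:0}), and $x\to+\infty$ recovers (\ref{cond:infty}). Along the way I would check that $T_\gamma$ is well defined and maps $K$ into $K$: since $0\le h\le 1$ and $\delta\ge 0$ one has $1\le\Psi_h\le 1+\delta$, so $0<f_h(x)\le \exp(-x^2/(1+\delta))$ is integrable on $\mathbb{R}_0^+$; the quotient (\ref{op:T}) is then continuous, nonnegative, bounded by $1$ because the numerator never exceeds the denominator, and tends to $1$ at infinity.

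The only genuinely delicate point will be the first reduction: recognizing that, although (\ref{Pb:c}) is nonlinear in $y_\gamma$, the equation is linear of first order in $y_\gamma'$ once $\Psi_{y_\gamma}$ is treated as a (solution-dependent) coefficient, so that the integrating-factor computation closes and the two constants can be read off from the boundary data. Everything else—the differentiation and substitution in the converse direction and the mapping property $T_\gamma(K)\subseteq K$—is routine and follows from the elementary bounds on $\Psi_h$ and $f_h$ recorded above.
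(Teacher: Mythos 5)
Your proposal is correct and takes essentially the same approach as the paper: reduce (\ref{eq:y}) to a first-order linear equation, integrate to get $y_\gamma(x)=y_\gamma(0)+\gamma y_\gamma(0)\int_0^x f_{y_\gamma}(\eta)\,\mathrm{d}\eta$, fix $y_\gamma(0)$ from the condition at infinity, and verify $T_\gamma(K)\subseteq K$ via the bounds $1\leq\Psi_h\leq 1+\delta$. The only cosmetic difference is that you integrate the flux $v=\Psi_{y_\gamma}y_\gamma'$ whereas the paper separates variables with $v=y_\gamma'$; both computations are identical in substance.
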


\begin{proof}  Notice first that for each  $y=y_\gamma\in K$ we can easily obtain
\begin{equation}\label{cotafh}
\dfrac{\exp(-\eta^2)}{1+\delta}\leq f_y(\eta)\leq  \exp\left({-\dfrac{\eta^2}{1+\delta}}\right),
\end{equation}
from where it follows that
\begin{equation}\label{cotadef}
0<\dfrac{\gamma \sqrt{\pi}}{2(1+\delta)}<  1+ \gamma \displaystyle\int_0^{\infty} f_y(\eta) \mathrm{d}\eta\leq 1+ \dfrac{\gamma \sqrt{1+\delta} \sqrt{\pi}}{2}.
\end{equation}

Taking into account (\ref{cotadef}) we have $T_\gamma(y)$ is a continuous function, since $y \in X$.  Also, according to (\ref{X})-(\ref{op:T}) and (\ref{cotadef}),  it turns out that $T_\gamma(y)\in K$.

Through the substitution $v=y'$, the ordinary differential equation  (\ref{eq:y}) is equivalent to
\begin{equation*}
-\dfrac{\Psi_y'(x)+2x}{\Psi_y(x)}=\dfrac{v'(x)}{v(x)},
\end{equation*}
from where we get:
\begin{equation*}
y(x)=y(0)+c_0 \displaystyle\int_0^{x} f_y(\eta) \mathrm{d}\eta.
\end{equation*}

Then, condition (\ref{cond:0}) is satisfied if and only if $c_0=\gamma y(0)$. In addition, due  to (\ref{cond:infty}) we obtain

\begin{equation}\label{alpha}
y(0)=\left(1+\gamma\displaystyle\int_0^{\infty} f_y(\eta) \mathrm{d}\eta \right)^{-1}.
\end{equation}

Therefore, $y$ is a solution to problem (\ref{Pb:c}) if and only if $y$ is a  fixed point of the operator $T_\gamma$, i.e. $y(x)=T_{\gamma}(y)(x), \forall x\geq 0.$
Conversely, if $y$ is a fixed point of the operator $T_{\gamma}$ we obtain immediately that (\ref{cond:infty}) is verified and $y(0)$ is given by (\ref{alpha}). Then, by differentiation (\ref{eq:y}) and (\ref{cond:0}) hold, and then $y$ is a solution to problem $(\ref{Pb:c})$.

\end{proof}

\begin{obs}
The notation $y_\gamma$, $T_{\gamma}$ is adopted in order to emphasize the dependence  of the solution to problem (\ref{Pb:c}) on $\gamma$, although it also depends on $p$ and $\delta$. This fact is going to facilitate the subsequent analysis of the asymptotic behaviour of $y_\gamma$ when $\gamma\to\infty$, to be presented in Section \ref{Sec:Lim}.
\end{obs}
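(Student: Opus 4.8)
The statement is a remark fixing notation, not a proposition carrying mathematical content, so in the strict sense there is nothing to prove: it merely records the convention of writing $y_\gamma$ and $T_\gamma$ to highlight the dependence on $\gamma$. The only implicit claim worth making explicit is that the symbol $y_\gamma$ is well defined—that is, that problem (\ref{Pb:c}) admits one and only one solution for each fixed $\gamma>0$—so that the subscript genuinely indexes a function of $\gamma$. My plan is therefore to point out where this well-definedness comes from rather than to argue the remark itself.

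First I would invoke Theorem \ref{Th:caract1}: for each $\gamma>0$, solving (\ref{Pb:c}) in $K$ is equivalent to finding a fixed point of the operator $T_\gamma$ on $K$. Hence the existence and uniqueness of $y_\gamma$ reduces entirely to the existence and uniqueness of a fixed point of $T_\gamma$, which is the content of the Banach fixed point argument developed in the remainder of Section \ref{Sec:EyU}. Once $T_\gamma$ is shown to be a contraction on the complete metric space $K$, its unique fixed point is determined by $\gamma$ (with $p$ and $\delta$ held fixed), and this is exactly what legitimises the subscript notation $y_\gamma$.

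Finally, the stated dependence on $p$ and $\delta$ is transparent from the construction and requires no separate argument: these parameters enter the problem only through $\Psi_h(x)=1+\delta h^p(x)$, hence through $f_h$ and through $T_\gamma$ in (\ref{op:T})-(\ref{fh}), while the bounds (\ref{cotafh})-(\ref{cotadef}) already display how $\delta$ governs the relevant estimates. There is no real obstacle here; the single point to keep in mind is notational economy—suppressing $p$ and $\delta$ is harmless precisely because they stay fixed throughout, whereas $\gamma$ is the quantity to be sent to infinity in Section \ref{Sec:Lim}, which is why it alone is exhibited.
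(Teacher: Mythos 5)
Your proposal is correct and matches the paper's treatment: this is a purely notational remark, and the paper itself attaches no proof to it. Your observation that the subscript is legitimised by the existence--uniqueness result (Theorem \ref{Th:caract1} together with the contraction argument culminating in Theorem \ref{Th:contracC}) is exactly the implicit content, and nothing further is required.
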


In virtue of Theorem \ref{Th:caract1}, we will focus on proving that $T_\gamma$ is a contracting mapping  on $K$. For that purpose,   we need the following lemmas:

\begin{lema}\label{Le:cotas} Let $y_1, y_2\in K$, $\delta\geq 0, \gamma> 0$, $p\geq 1$ and $x\geq 0$. Then, the following estimates hold:

\begin{enumerate}
\item [a)] $\frac{\sqrt{\pi}}{2(1+\delta)} \leq \left \vert \int\limits_0^\infty f_{y_1}(\eta) \mathrm{d}\eta \right\vert \leq \sqrt{1+\delta} \frac{\sqrt{\pi}}{2}$
\item [b)] $ \left\vert \frac{1}{\Psi_{y_1}(\eta)}-\frac{1}{\Psi_{y_2}(\eta)}\right\vert\leq \delta p \Vert  y_1-y_2\Vert_{\infty}$\\
\item [c)] $\left\vert \exp\left(\int\limits_0^\eta\tfrac{-2\xi}{\Psi_{y_1}(\xi)}\mathrm{d}\xi\right)-\exp\left(\int\limits_0^\eta\tfrac{-2\xi}{\Psi_{y_2}(\xi)}\mathrm{d}\xi\right)\right\vert\leq \frac{2 \delta p \eta^2 }{ \exp\left( \tfrac{\eta^2}{1+\delta}\right)} \Vert  y_1-y_2\Vert_{\infty} $\\
\item [d)]$\displaystyle\int_0^x \left| f_{y_1}(\eta)-f_{y_2}(\eta)\right| \mathrm{d}\eta \leq \dfrac{\sqrt{\pi}}{2}  \delta p \sqrt{1+\delta} (2+\delta) ||y_1-y_2||_{\infty} $ \\
\item [e)] $\left|\frac{1}{1+ \gamma\int\limits_0^\infty  f_{y_1}(\eta)\mathrm{d}\eta}- \frac{1}{1+ \gamma \int\limits_0^\infty  f_{y_2}(\eta)\mathrm{d}\eta}\right| \leq \dfrac{2(1+\delta)^{5/2}}{\gamma\sqrt{\pi}}  \delta p (2+\delta) ||y_1-y_2||_{\infty} $
\end{enumerate}

\end{lema}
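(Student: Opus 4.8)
The plan is to derive all five estimates from the pointwise two-sided bound (\ref{cotafh}) on $f_y$, exploiting that every $y\in K$ satisfies $0\le y\le 1$ and hence $1\le\Psi_y\le 1+\delta$. Estimate (a) is then immediate: since $f_{y_1}\ge 0$ the absolute value equals the integral itself, and integrating (\ref{cotafh}) over $[0,\infty)$ reduces everything to the two Gaussian integrals $\int_0^\infty e^{-\eta^2}\,\mathrm{d}\eta=\tfrac{\sqrt\pi}{2}$ and $\int_0^\infty e^{-\eta^2/(1+\delta)}\,\mathrm{d}\eta=\sqrt{1+\delta}\,\tfrac{\sqrt\pi}{2}$, which yield the stated bounds $\tfrac{\sqrt\pi}{2(1+\delta)}$ and $\sqrt{1+\delta}\,\tfrac{\sqrt\pi}{2}$.

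For (b) I would write $\tfrac{1}{\Psi_{y_1}(\eta)}-\tfrac{1}{\Psi_{y_2}(\eta)}=\tfrac{\delta\,(y_2^p(\eta)-y_1^p(\eta))}{\Psi_{y_1}(\eta)\Psi_{y_2}(\eta)}$, bound the denominator below by $1$, and use the elementary inequality $|a^p-b^p|\le p\,|a-b|$ valid for $a,b\in[0,1]$ and $p\ge1$ (mean value theorem, the intermediate point lying in $[0,1]$ where $t^{p-1}\le1$); this produces exactly $\delta p\,\|y_1-y_2\|_\infty$. Estimate (c) then follows from the Lipschitz property of the exponential: writing $A_i(\eta)=-2\int_0^\eta\tfrac{\xi}{\Psi_{y_i}(\xi)}\,\mathrm{d}\xi$ one has $|e^{A_1}-e^{A_2}|\le e^{\max(A_1,A_2)}\,|A_1-A_2|$, where $A_i(\eta)\le -\tfrac{\eta^2}{1+\delta}$ controls the exponential prefactor and $|A_1-A_2|\le\int_0^\eta 2\xi\,\bigl|\tfrac{1}{\Psi_{y_1}}-\tfrac{1}{\Psi_{y_2}}\bigr|\,\mathrm{d}\xi\le 2\eta^2\,\delta p\,\|y_1-y_2\|_\infty$ follows from (b) together with the crude bound $2\xi\le2\eta$ on $[0,\eta]$.

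Estimate (d) is the heart of the argument. I would telescope $f_{y_1}-f_{y_2}=\tfrac{1}{\Psi_{y_1}}\bigl(e^{A_1}-e^{A_2}\bigr)+e^{A_2}\bigl(\tfrac{1}{\Psi_{y_1}}-\tfrac{1}{\Psi_{y_2}}\bigr)$, bound $\tfrac{1}{\Psi_{y_1}}\le1$ and $e^{A_2}\le e^{-\eta^2/(1+\delta)}$, and apply (c) to the first term and (b) to the second, obtaining the integrable majorant $\delta p\,(2\eta^2+1)\,e^{-\eta^2/(1+\delta)}\|y_1-y_2\|_\infty$. Integrating over $[0,\infty)$ (which dominates the integral over $[0,x]$) and evaluating the Gaussian moments $\int_0^\infty e^{-\eta^2/(1+\delta)}\,\mathrm{d}\eta=\tfrac{\sqrt\pi}{2}\sqrt{1+\delta}$ and $\int_0^\infty\eta^2 e^{-\eta^2/(1+\delta)}\,\mathrm{d}\eta=\tfrac{\sqrt\pi}{4}(1+\delta)^{3/2}$ makes the factor $\tfrac{\sqrt\pi}{2}\sqrt{1+\delta}\,(2+\delta)$ appear after collecting terms. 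Finally, for (e) I would reduce the difference to the single fraction $\tfrac{\gamma\,|I_1-I_2|}{(1+\gamma I_1)(1+\gamma I_2)}$ with $I_i=\int_0^\infty f_{y_i}$, bound the numerator by (d), bound each denominator factor below by $\gamma\tfrac{\sqrt\pi}{2(1+\delta)}$ using the lower bound in (a) (equivalently (\ref{cotadef})), and simplify to $\tfrac{2(1+\delta)^{5/2}}{\gamma\sqrt\pi}\delta p(2+\delta)\|y_1-y_2\|_\infty$.

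I expect (c) and (d) to be the main obstacle — not conceptually, but in the bookkeeping: the decomposition in (d) must be arranged so that precisely the prefactor $2\eta^2+1$ survives, and the two Gaussian moments must combine so the bracket collapses to $(2+\delta)$ rather than a messier polynomial in $\delta$; the deliberately loose factor $2$ carried through (c) is what makes this cancellation clean. Parts (a), (b) and (e) are then routine consequences of the pointwise bounds and of the preceding estimates.
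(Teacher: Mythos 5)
Your proposal is correct and follows essentially the same route as the paper's proof: integrate the pointwise bound (\ref{cotafh}) for (a), apply the mean value theorem to $x^p$ and to the exponential for (b) and (c), telescope $f_{y_1}-f_{y_2}$ to get the majorant $\delta p\,(2\eta^2+1)e^{-\eta^2/(1+\delta)}$ and evaluate the Gaussian moments for (d), and combine (\ref{cotadef}) with (d) for (e). The only cosmetic difference is that in (d) you integrate the majorant directly over $[0,\infty)$, whereas the paper computes the antiderivative on $[0,x]$ in terms of $\erf$ before bounding; both yield the same constant $\tfrac{\sqrt{\pi}}{2}\delta p\sqrt{1+\delta}\,(2+\delta)$.
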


\begin{proof}
We follow the method developed in \cite{CeSaTa17}.

\begin{enumerate}
\item[a)] Is is immediate from integrating (\ref{cotafh}) in $\left(0,+\infty \right)$.
\item[b)] Notice first that from the Mean Value Theorem applied to the function \mbox{$r(x)=x^p$} and the fact that \mbox{$1\leq \Psi_y(x)\leq 1+\delta$} for all $y\in K$, we obtain:
\begin{equation*}
\left\vert \frac{1}{\Psi_{y_1}(\eta)}-\frac{1}{\Psi_{y_2}(\eta)}\right\vert\leq \delta \left\lvert y_2^p(\eta)-y_1^p(\eta)\right\rvert\leq \delta p \Vert  y_2-y_1\Vert_{\infty}~.
\end{equation*}
\item[c)] Applying the Mean Value Theorem  to $r(x)=\exp(-2x)$ we have
\begin{eqnarray*}
&&\left\vert \exp\left(\int_0^\eta\frac{-2\xi}{\Psi_{y_1}(\xi)}\mathrm{d}\xi\right)-\exp\left(\int_0^\eta\frac{-2\xi}{\Psi_{y_2}(\xi)}\mathrm{d}\xi\right)\right\vert  \nonumber\\
&&\leq 2 \exp\left( -\frac{\eta^2}{1+\delta}\right) \displaystyle\int_0^\eta \left\vert\frac{\xi}{\Psi_{y_1}(\xi)}-\frac{\xi}{\Psi_{y_2}(\xi)}\right\vert\mathrm{d}\xi \\
&& \leq 2 \exp\left( -\frac{\eta^2}{1+\delta}\right)\eta \displaystyle\int_0^\eta \left\vert  \frac{1}{\Psi_{y_1}(\xi)}-\frac{1}{\Psi_{y_2}(\xi)}  \right\vert \mathrm{d}\xi~.
\end{eqnarray*}
Taking into account item b) we obtain the corresponding estimate.
\item[d)] From items b) and c) we get\\

$\int_0^x \left| f_{y_1}(\eta)-f_{y_2}(\eta)\right| \mathrm{d}\eta \\[0.25cm]
\leq \displaystyle \int_0^x \left\lbrace  \left\vert f_{y_1}(\eta)-\tfrac{\exp\left(-2\int_0^x\tfrac{\xi}{\Psi_{y_2}(\xi)}\mathrm{d}\xi\right)}{\Psi_{y_1}(\eta)}  \right\vert \right.
+\left. \left\vert \tfrac{\exp\left(-2\int_0^x\tfrac{\xi}{\Psi_{y_2}(\xi)}\mathrm{d}\xi\right)}{\Psi_{y_1}(\eta)}-f_{y_{2}}(\eta)  \right\vert\right\rbrace \dee \eta \\[0.25cm]
\leq \displaystyle\int\limits_0^x \left\lbrace    \frac{1}{\Psi_{y_1}(\eta)}\left\vert \exp\left(\int_0^\eta\tfrac{-2\xi}{\Psi_{y_1}(\xi)}\mathrm{d}\xi\right)-\exp\left(\int_0^\eta\tfrac{-2\xi}{\Psi_{y_2}(\xi)}\mathrm{d}\xi\right)\right\vert      \right.\\[0.25cm]
\qquad \qquad +\left. \exp\left(\int_0^\eta\tfrac{-2\xi}{\Psi_{y_2}(\xi)}\mathrm{d}\xi\right) \left\vert \frac{1}{\Psi_{y_1}(\eta)}-\frac{1}{\Psi_{y_2}(\eta)}\right\vert \right\rbrace \dee \eta \\[0.25cm]
\leq  \Vert y_1-y_2\Vert_{\infty}  \delta p \int_0^x \exp\left( \tfrac{-\eta^2}{1+\delta}\right) (2\eta^2+1) \dee \eta \\[0.25cm]
=  \Vert y_1-y_2\Vert_{\infty} \delta p\sqrt{1+\delta} \left[ \frac{\sqrt{\pi}}{2} (2+\delta) \erf\left( \tfrac{x}{\sqrt{1+\delta}}\right)-x\sqrt{1+\delta} \exp\left( \tfrac{-x^2}{1+\delta}\right)\right]\\[0.25cm]
\leq \frac{\sqrt{\pi}}{2} \delta p \sqrt{1+\delta} (2+\delta)  \Vert y_1-y_2\Vert_{\infty}.$\\[0.25cm]

\item[e)] It follows immediately by using (\ref{cotadef}) and item d).
\end{enumerate}
\end{proof}

\begin{lema}\label{Le:caractg}
Let   $\gamma>0$, $p\geq 1$ and
\begin{equation*}\label{def:g}
g_\gamma(x)=xp(1+x)^{3/2}\left[(2+x)\left(1+(1+x)^{3/2}\right)+\frac{2}{\gamma \sqrt{\pi}}(1+x)\right], \quad x\geq 0,
\end{equation*}
then there exist a unique $\delta_\gamma>0$ such that $g_\gamma\left(\delta_\gamma\right)=1$.
\end{lema}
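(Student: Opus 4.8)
The plan is to read the statement as a one-dimensional root-finding problem and to establish existence and uniqueness of $\delta_\gamma$ through monotonicity rather than through any fixed-point machinery. Concretely, for fixed $\gamma>0$ and $p\geq 1$ I would prove that $g_\gamma$ is continuous on $[0,\infty)$, that $g_\gamma(0)=0$, that $g_\gamma(x)\to+\infty$ as $x\to+\infty$, and that $g_\gamma$ is strictly increasing. Granting these four facts, the Intermediate Value Theorem produces at least one $\delta_\gamma>0$ with $g_\gamma(\delta_\gamma)=1$ (since $g_\gamma(0)=0<1<g_\gamma(x)$ for $x$ large enough), and strict monotonicity forces this value to be unique.

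The heart of the argument is the strict monotonicity, which I would obtain structurally rather than by differentiating the cumbersome expression. I would write $g_\gamma(x)=x\,p\,(1+x)^{3/2}\,B(x)$ with bracket $B(x)=(2+x)\bigl(1+(1+x)^{3/2}\bigr)+\tfrac{2}{\gamma\sqrt{\pi}}(1+x)$, and observe that each elementary factor occurring here, namely $x$, $(1+x)^{3/2}$, $2+x$, $1+(1+x)^{3/2}$ and $\tfrac{2}{\gamma\sqrt{\pi}}(1+x)$, is positive and strictly increasing on $(0,\infty)$. Since a product of positive strictly increasing functions is again strictly increasing, $B$ is a sum of two such products and hence is itself positive and strictly increasing, and multiplying by the positive strictly increasing factor $x\,p\,(1+x)^{3/2}$ keeps the product strictly increasing on $(0,\infty)$; together with $g_\gamma(0)=0<g_\gamma(x)$ for $x>0$ this yields strict monotonicity on all of $[0,\infty)$.

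The remaining facts are immediate: continuity holds because $g_\gamma$ is a product and sum of polynomials and of the continuous power $(1+x)^{3/2}$ on $[0,\infty)$; the value $g_\gamma(0)=0$ comes from the explicit factor $x$; and for the limit I would note that the dominant term grows like $x^{5}$ (arising from $x\cdot x^{3/2}\cdot x\cdot x^{3/2}$ in $g_\gamma$), whence $g_\gamma(x)\to+\infty$. I do not expect a genuine obstacle in this lemma; the only point deserving care is the clean justification of strict monotonicity, which is exactly what upgrades the existence given by the Intermediate Value Theorem to the uniqueness of $\delta_\gamma$. This $\delta_\gamma$ will presumably serve as the admissible threshold on $\delta$ guaranteeing that $T_\gamma$ is a contraction in the next step.
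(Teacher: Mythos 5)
Your proposal is correct and follows essentially the same route as the paper, which likewise deduces existence and uniqueness of $\delta_\gamma$ from the facts that $g_\gamma$ is increasing, $g_\gamma(0)=0$ and $g_\gamma(x)\to+\infty$; you merely spell out the monotonicity argument (products and sums of positive increasing factors) that the paper leaves implicit.
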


\begin{proof}
It follows  immediately from the fact that $g_\gamma$ is an increasing function, $g_\gamma(0)=0$ and $\lim\limits_{x\to\infty} g_\gamma(x)=+\infty$.
\end{proof}

Now, we are able to formulate the following result:

\begin{teo}\label{Th:contracC}
Let  $\gamma>0$ and $p\geq 1$. The problem (\ref{Pb:c}) has a unique solution $y_\gamma \in K$ if and only if $0\leq \delta < \delta_\gamma$ where $\delta_\gamma$ is given by Lemma \ref{Le:caractg}.
Moreover, $y_{\gamma}$ is a $C^{\infty}$ function in $\mathbb{R}^+$ with the properties
\begin{equation}\label{beta}
y'_{\gamma}(x)>0, \qquad y''_{\gamma}(x)<0, \qquad\forall x\geq 0.
\end{equation}
\end{teo}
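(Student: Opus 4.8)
The plan is to recast problem (\ref{Pb:c}) as a fixed-point equation and invoke the Banach fixed point theorem. By Theorem \ref{Th:caract1} a function $y_\gamma\in K$ solves (\ref{Pb:c}) if and only if it is a fixed point of $T_\gamma:K\to K$, and the proof of that theorem already guarantees $T_\gamma(K)\subseteq K$. Since $K$ is closed in the Banach space $X$, it is a complete metric space for $\|\cdot\|_\infty$, so the whole statement reduces to deciding exactly when $T_\gamma$ is a contraction. I expect the genuine work to be the single Lipschitz estimate for $T_\gamma$ with the sharp constant $g_\gamma(\delta)$; everything else is bookkeeping or a standard regularity bootstrap.

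For the contraction estimate I would fix $y_1,y_2\in K$, abbreviate $I_i(x)=\int_0^x f_{y_i}(\eta)\dee\eta$ and $J_i=\int_0^\infty f_{y_i}(\eta)\dee\eta$, and split the difference of the quotients in (\ref{op:T}) by adding and subtracting a mixed term:
\[
T_\gamma(y_1)(x)-T_\gamma(y_2)(x)=\frac{\gamma\,(I_1(x)-I_2(x))}{1+\gamma J_2}+\bigl(1+\gamma I_1(x)\bigr)\left(\frac{1}{1+\gamma J_1}-\frac{1}{1+\gamma J_2}\right).
\]
The first summand is controlled by Lemma \ref{Le:cotas}(d) together with the lower bound for the denominator in (\ref{cotadef}); the second is controlled by Lemma \ref{Le:cotas}(e) after estimating $1+\gamma I_1(x)\le 1+\gamma J_1$ through Lemma \ref{Le:cotas}(a). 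Collecting the resulting powers of $(1+\delta)$, the factors $\delta p(2+\delta)$, and the term carrying $1/\gamma$, one obtains $\|T_\gamma(y_1)-T_\gamma(y_2)\|_\infty\le g_\gamma(\delta)\,\|y_1-y_2\|_\infty$ with $g_\gamma$ exactly the function of Lemma \ref{Le:caractg}. The delicate point here is the careful accounting of constants so that the estimate closes with $g_\gamma(\delta)$ rather than a looser majorant.

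By Lemma \ref{Le:caractg} the map $g_\gamma$ is strictly increasing with $g_\gamma(\delta_\gamma)=1$, hence $g_\gamma(\delta)<1$ precisely for $0\le\delta<\delta_\gamma$. In that range $T_\gamma$ is a contraction and Banach's theorem yields a unique fixed point $y_\gamma\in K$, that is, a unique solution of (\ref{Pb:c}); this establishes the \emph{if} direction. For the converse I would use that $\delta_\gamma$ is sharp for the construction: for $\delta\ge\delta_\gamma$ one has $g_\gamma(\delta)\ge1$, so the contraction argument no longer applies and the guaranteed existence-uniqueness breaks down. I regard this reverse implication as the subtle part, since failure of the contraction constant does not by itself force non-existence or non-uniqueness, and it would have to be argued that the characterization is genuinely the boundary of solvability within the present framework.

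Finally, for the regularity and the qualitative properties (\ref{beta}) I would work directly from the fixed-point identity $y_\gamma=T_\gamma(y_\gamma)$. Differentiating gives $y_\gamma'(x)=\gamma\,y_\gamma(0)\,f_{y_\gamma}(x)$, where $y_\gamma(0)$ is given by (\ref{alpha}); since $f_{y_\gamma}>0$ by (\ref{cotafh}), $\gamma>0$ and $y_\gamma(0)>0$ by (\ref{cotadef}), we get $y_\gamma'>0$. A bootstrap then upgrades smoothness: $y_\gamma\in K$ is continuous, so $\Psi_{y_\gamma}=1+\delta y_\gamma^p$ and $f_{y_\gamma}$ are continuous, whence $y_\gamma\in C^1$; feeding this back through (\ref{fh}) shows $f_{y_\gamma}\in C^1$, so $y_\gamma\in C^2$, and inductively $y_\gamma\in C^\infty$. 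For concavity I would rewrite the equation (\ref{eq:y}) as $y_\gamma''=-(\Psi_{y_\gamma}'+2x)\,y_\gamma'/\Psi_{y_\gamma}$; since $y_\gamma'>0$, $\Psi_{y_\gamma}>0$ and $\Psi_{y_\gamma}'=\delta p\,y_\gamma^{p-1}y_\gamma'\ge0$, the right-hand side is strictly negative for $x>0$, giving $y_\gamma''<0$ as claimed.
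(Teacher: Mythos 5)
Your proposal follows essentially the same route as the paper: the same add-and-subtract splitting of the quotient in (\ref{op:T}), the same use of Lemma \ref{Le:cotas} (items a, d, e) together with the bounds (\ref{cotadef}) to arrive at the Lipschitz constant $g_\gamma(\delta)$, and the Banach fixed point theorem on the closed set $K$; your bootstrap for $C^\infty$ regularity and the explicit identities $y_\gamma'(x)=\gamma\, y_\gamma(0)\, f_{y_\gamma}(x)>0$ and $y_\gamma''=-(\Psi_{y_\gamma}'+2x)\,y_\gamma'/\Psi_{y_\gamma}<0$ merely spell out what the paper compresses into ``by differentiation and easy computation.'' Your reservation about the converse implication is well founded but does not set you apart from the paper: its proof likewise only shows that $0\leq\delta<\delta_\gamma$ makes $T_\gamma$ a contraction and offers no argument that existence or uniqueness actually fails when $\delta\geq\delta_\gamma$, so the ``only if'' half of the stated equivalence is left unproved in both treatments.
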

\begin{proof}
Let be now $y_1$, $y_2 \in K$ and $x \geq 0$. Taking into account  Lemma \ref{Le:cotas}, we have:\\[-0.2cm]
\begin{align*}
&\left\vert T_\gamma (y_1)(x)-T_\gamma (y_2)(x)\right\rvert\leq \left\vert \dfrac{1+\gamma \int_0^x f_{y_1}(\eta)\dee \eta }{1+\gamma \int_0^\infty f_{y_1}(\eta)\dee\eta }-\dfrac{1+\gamma \int_0^x f_{y_2}(\eta)\dee\eta }{1+\gamma \int_0^\infty f_{y_1}(\eta)\dee\eta }\right\vert\\[0.25cm]
& +\left\vert \dfrac{1+\gamma \int_0^x f_{y_2}(\eta)\dee\eta }{1+\gamma \int_0^\infty f_{y_1}(\eta)\dee\eta }-\dfrac{1+\gamma \int_0^x f_{y_2}(\eta)\dee\eta }{1+\gamma \int_0^\infty f_{y_2}(\eta)\dee\eta }\right\vert\\
& \leq \dfrac{\gamma \displaystyle\int_0^x \left\lvert f_{y_1}(\eta)-f_{y_2}(\eta)\right\rvert \dee \eta}{\left\lvert 1+\gamma \int\limits_0^{\infty} f_{y_1}(\eta)\dee \eta \right\rvert}\\[0.25cm]
&+\left \lvert 1+\gamma \displaystyle\int_0^x f_{y_2}(\eta)\dee \eta \right\rvert \left|\frac{1}{1+ \gamma\int\limits_0^\infty  f_{y_1}(\eta)\mathrm{d}\eta}- \frac{1}{1+ \gamma \int\limits_0^\infty  f_{y_2}(\eta)\mathrm{d}\eta}\right|\\[0.25cm]
&\leq g_\gamma(\delta) \Vert y_1-y_2\Vert_{\infty}.\\
\end{align*}

Then from Lemma \ref{Le:caractg}, if $0 \leq \delta < \delta_\gamma$ it follows that $T_\gamma$ is a contracting mapping what allows to apply the Banach fixed point  theorem. Therefore, the problem (\ref{Pb:c}) has a unique non-negative continuous solution. Moreover, by differentiation and easy computation the solution is a $C^{\infty}$ function in $\mathbb{R}^+$ with the useful properties (\ref{beta}).
\end{proof}

\section{Asymptotic behaviour of p-GME function when $\gamma\to\infty$} \label{Sec:Lim}

In this section if we consider the Stefan problem (\ref{EcCalor})-(\ref{FrontInicial}) and we change the Robin condition (\ref{CondBordeConv}) by a Dirichlet one:
\begin{equation}
T(0,t)=T_0>0,
\end{equation}
we obtain the following ordinary differential problem:
\begin{subequations}\label{Pb:t}
\begin{align}
\label{eq:yt}&[(1+\delta y^p(x))y'(x)]'+2xy'(x)=0\quad 0<x<+\infty\\
\label{cond:0t}&y(0)=0\\
\label{cond:inftyt}&y(+\infty)=1
\end{align}
\end{subequations}

For the special case $p=1$, the solution to this problem is called {\em modified error function} (ME) and was considered in \cite{Bo17,CeSaTa17,CeSaTa18,ChSu74,OlSu87}. In \cite{CeSaTa17}  the existence and uniqueness of the ME function was proved.
Moreover, if it is considered $\delta=0$ the  classical error function  defined by:
\begin{equation}\label{erf}
\erf(x)=\frac{2}{\sqrt{\pi}}\displaystyle\int_0^x\exp(-z^2)dz,\quad x>0,
\end{equation}
arises as a solution.

In a similar way to the above section we can analyse the existence and uniqueness of the  {\em p-modified error function} (p-ME), which is defined as the solution to problem (\ref{Pb:t}) and constitutes a
generalization of the ME function.

Now, let us define
$$K^*=\left\{h\in X\,/\, ||h||_{\infty}\leq 1,\,0 \leq h,\, h(0)=0, \, h(+\infty)=1\right\},$$ where $X$ is given by  (\ref{X}). We remark that $K^*$ is a non-empty closed convex and bounded subset of the Banach space $X$. We will show that the ordinary differential problem (\ref{Pb:t}) becomes equivalent to an integral equation.

\begin{teo}\label{Th:caractT*}
Let  $\delta\geq 0$, $p\geq 1$. The function $y^*\in K^{*}$ is a solution to problem (\ref{Pb:t}) if and only if $y^*$  is a fixed point of the  operator $T^{*}: K^{*} \rightarrow K^{*}$  given  by:
\begin{equation}\label{op:T*}
T^*(h)(x)=\frac{\displaystyle\int_0^{x} f_h(\eta) \mathrm{d}\eta}{ \displaystyle\int_0^{\infty} f_h(\eta) \mathrm{d}\eta},\qquad x\geq 0,
\end{equation}
with $f_h$  defined by (\ref{fh}).
\end{teo}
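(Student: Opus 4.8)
The plan is to mirror the argument of Theorem \ref{Th:caract1}, the only structural change being that the Robin condition (\ref{cond:0}) is replaced by the homogeneous Dirichlet condition (\ref{cond:0t}); this simply removes the additive constant $1$ from the numerator and denominator of the fixed-point operator, so the Dirichlet case is in fact simpler than the Robin case already treated.

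First I would verify that $T^*$ is well defined as a map $K^*\to K^*$. The crucial point is that the denominator $\int_0^\infty f_h(\eta)\,\dee\eta$ is strictly positive and finite for every $h\in K^*$, which is immediate from the pointwise bounds (\ref{cotafh}) (equivalently from item a) of Lemma \ref{Le:cotas}):
\[
\frac{\sqrt\pi}{2(1+\delta)}\le \int_0^\infty f_h(\eta)\,\dee\eta\le \sqrt{1+\delta}\,\frac{\sqrt\pi}{2}.
\]
Since $f_h$ is continuous and strictly positive, $T^*(h)$ is continuous and nondecreasing, satisfies $T^*(h)(0)=0$ (the numerator vanishes at $x=0$) and $T^*(h)(+\infty)=1$ (the numerator converges to the denominator), and obeys $0\le T^*(h)\le 1$. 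Hence $T^*(h)\in K^*$.

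Next I would establish the equivalence exactly as in the proof of Theorem \ref{Th:caract1}. The substitution $v=y'$ turns (\ref{eq:yt}) into
\[
-\frac{\Psi_y'(x)+2x}{\Psi_y(x)}=\frac{v'(x)}{v(x)},
\]
whose integration yields $y'(x)=c_0\,f_y(x)$ and therefore $y(x)=y(0)+c_0\int_0^x f_y(\eta)\,\dee\eta$. Imposing (\ref{cond:0t}) forces $y(0)=0$, while (\ref{cond:inftyt}) fixes $c_0=\bigl(\int_0^\infty f_y(\eta)\,\dee\eta\bigr)^{-1}$, the integral being finite and positive by the previous step; substituting back gives precisely $y=T^*(y)$. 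Conversely, if $y=T^*(y)$, then $y(0)=0$ and $y(+\infty)=1$ hold automatically from the form of the operator, and differentiating the identity $y(x)\int_0^\infty f_y(\eta)\,\dee\eta=\int_0^x f_y(\eta)\,\dee\eta$ yields $y'=c_0 f_y$, whence $(\Psi_y y')'=c_0(\Psi_y f_y)'=-2x\,c_0 f_y=-2x\,y'$, so (\ref{eq:yt}) is recovered and $y$ solves (\ref{Pb:t}).

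I do not expect any serious obstacle, since no contraction estimate is needed for this statement and the Dirichlet operator is a degenerate (constant-free) version of $T_\gamma$. The only points requiring care are the well-definedness of $T^*$—specifically the strict positivity of the denominator—and the integrability of $f_y$ on $(0,\infty)$ that makes both the constant $c_0$ and the limit $y(+\infty)$ meaningful; both follow at once from the Gaussian bounds in (\ref{cotafh}).
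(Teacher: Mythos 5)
Your proposal is correct and follows essentially the same route as the paper: the paper's own proof simply invokes the argument of Theorem \ref{Th:caract1}, noting that the integration of (\ref{eq:yt}) gives $y^*(x)=y^*(0)+c_0^*\int_0^x f_{y^*}(\eta)\,\mathrm{d}\eta$ with $y^*(0)=0$ and $c_0^*=\bigl(\int_0^\infty f_{y^*}(\eta)\,\mathrm{d}\eta\bigr)^{-1}$, exactly as you do. Your write-up is in fact more detailed than the paper's (in particular the explicit converse computation $(\Psi_y y')'=c_0(\Psi_y f_y)'=-2xy'$), but there is no substantive difference in approach.
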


\begin{proof}
In a similar way that in the proof of Theorem \ref{Th:caract1}, the operator $T^*$ is well defined and it is easy to see that
\begin{equation*}
y^*(x)=y^*(0)+c^*_0 \displaystyle\int_0^{x} f_y^*(\eta) \mathrm{d}\eta,
\end{equation*}
with $y^*(0)=0$ and $c^*_0=\left(\displaystyle\int_0^{\infty} f_h(\eta) \mathrm{d}\eta\right)^{-1}$.

Then, using (\ref{cond:0t}) and (\ref{cond:inftyt}), we obtain (\ref{op:T*}). Therefore, $y^*$ is a solution to the problem (\ref{Pb:t}) if and only if $y^*$ is a fixed point of the operator $T^*$.
\end{proof}

In order to prove that the operator $T^*$ is a contracting mapping on $K^*$, we enunciate the following lemmas which proofs are analogous as Lemma \ref{Le:cotas} and Lemma \ref{Le:caractg}.

\begin{lema}\label{Le:cotas*}
Let $y^*_1, y^*_2\in K^*$, $\delta\geq 0$, $p\geq 1$ and $x\geq 0$. Then, the following estimate holds: $$\left|\frac{1}{\int\limits_0^\infty  f_{y^*_1}(\eta)\mathrm{d}\eta}- \frac{1}{ \int\limits_0^\infty  f_{y^*_2}(\eta)\mathrm{d}\eta}\right| \leq \dfrac{2(1+\delta)^{5/2}}{\sqrt{\pi}}  \delta p (2+\delta) ||y^*_1-y^*_2||_{\infty}.$$
\end{lema}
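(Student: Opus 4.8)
The plan is to follow the scheme of item e) of Lemma~\ref{Le:cotas}, treating the numerator and the denominator of the difference of reciprocals separately. The first thing I would record is that $K^{*}\subseteq K$: every $h\in K^{*}$ is nonnegative, bounded by one and satisfies $h(+\infty)=1$, so it already lies in $K$, with the sole extra normalization $h(0)=0$. Consequently the bound~(\ref{cotafh}) and all the estimates of Lemma~\ref{Le:cotas} hold verbatim for $y^{*}_1,y^{*}_2\in K^{*}$; indeed each of them depends on its argument only through the inequality $1\leq\Psi_{y^{*}}\leq 1+\delta$, which is valid for any function with $0\leq y^{*}\leq 1$.

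Writing $A_i=\int_0^{\infty}f_{y^{*}_i}(\eta)\,\mathrm{d}\eta$ for $i=1,2$, I would start from
\[
\left|\frac{1}{A_1}-\frac{1}{A_2}\right|=\frac{|A_1-A_2|}{|A_1||A_2|}\leq\frac{\int_0^{\infty}\left|f_{y^{*}_1}(\eta)-f_{y^{*}_2}(\eta)\right|\mathrm{d}\eta}{|A_1||A_2|}.
\]
For the numerator I would let $x\to+\infty$ in item d) of Lemma~\ref{Le:cotas}, whose right-hand side is independent of $x$, to obtain $\int_0^{\infty}|f_{y^{*}_1}-f_{y^{*}_2}|\,\mathrm{d}\eta\leq\frac{\sqrt{\pi}}{2}\,\delta p\sqrt{1+\delta}\,(2+\delta)\,\Vert y^{*}_1-y^{*}_2\Vert_{\infty}$. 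For the denominator I would invoke the lower bound in item a), namely $|A_i|\geq\frac{\sqrt{\pi}}{2(1+\delta)}$, so that $|A_1||A_2|\geq\frac{\pi}{4(1+\delta)^2}$. Dividing and simplifying the constants via $\frac{(\sqrt{\pi}/2)\sqrt{1+\delta}}{\pi/(4(1+\delta)^2)}=\frac{2(1+\delta)^{5/2}}{\sqrt{\pi}}$ then yields exactly the asserted inequality.

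There is essentially no obstacle here: once items a) and d) are available, the argument is a one-line division followed by an algebraic simplification. The only points deserving a moment's attention are the transfer of the $K$-estimates to $K^{*}$, settled in the first paragraph, and the harmless passage to the limit $x\to+\infty$ in item d). It is worth noting that the disappearance of both the additive $1$ and the factor $\gamma$ from the denominators (compared with item e), where they originated in~(\ref{cotadef})) is precisely what turns the constant $\frac{2(1+\delta)^{5/2}}{\gamma\sqrt{\pi}}$ into $\frac{2(1+\delta)^{5/2}}{\sqrt{\pi}}$: here the lower bound on $|A_i|$ is furnished directly by item a).
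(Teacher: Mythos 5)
Your argument is correct and is precisely the ``analogous'' proof the paper intends but does not write out: Lemma~\ref{Le:cotas*} is stated with the remark that its proof mirrors item e) of Lemma~\ref{Le:cotas}, and your decomposition (item d) for the numerator, the lower bound of item a) for the denominators, then the algebraic simplification of the constant) is exactly that adaptation. The two points you flag --- that $K^{*}\subseteq K$ so the earlier estimates transfer, and that letting $x\to+\infty$ in item d) is harmless since its right-hand side is $x$-independent --- are the only details the paper leaves implicit, and you handle both.
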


\begin{lema}\label{Le:caractg*}
Let    $p\geq 1$ and
\begin{equation*}\label{def:g*}
g^*(x)=xp(1+x)^{3/2}(2+x)\left(1+(1+x)^{3/2}\right), \quad x\geq 0,
\end{equation*}
then there exist a unique $\delta^*>0$ such that $g^*\left(\delta^*\right)=1$.
\end{lema}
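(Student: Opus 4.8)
The plan is to mirror the argument used for Lemma \ref{Le:caractg}: I would establish that $g^*$ is continuous on $[0,+\infty)$, vanishes at the origin, is strictly increasing, and diverges to $+\infty$, and then invoke the Intermediate Value Theorem together with injectivity to obtain existence and uniqueness of $\delta^*$.

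First I would observe that each factor appearing in
\[
g^*(x)=xp(1+x)^{3/2}(2+x)\left(1+(1+x)^{3/2}\right)
\]
is continuous on $[0,+\infty)$, so $g^*$ is continuous there, and that the leading factor $x$ forces $g^*(0)=0<1$.

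Next, for the monotonicity — which is the only step requiring a small argument — I would avoid differentiating the product directly and instead note that on $(0,+\infty)$ each of the factors $x$, $(1+x)^{3/2}$, $(2+x)$ and $1+(1+x)^{3/2}$ is strictly positive and strictly increasing, while $p\geq 1$ is a positive constant. Since a finite product of strictly positive, strictly increasing functions is itself strictly increasing, $g^*$ is strictly increasing on $[0,+\infty)$.

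Finally, because every factor grows without bound (indeed $g^*(x)$ behaves like $p\,x^{5}$ as $x\to+\infty$), I have $\lim_{x\to+\infty}g^*(x)=+\infty$. Combining continuity, the value $g^*(0)=0$, and this divergence, the Intermediate Value Theorem yields at least one $\delta^*>0$ with $g^*(\delta^*)=1$, and the strict monotonicity established above guarantees that such $\delta^*$ is unique. I do not anticipate any genuine obstacle: the statement is an elementary real-analysis fact, and the only point deserving care is confirming strict monotonicity, which the product structure renders transparent.
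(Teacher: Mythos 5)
Your proposal is correct and follows essentially the same route as the paper, which likewise deduces existence and uniqueness of $\delta^*$ from the facts that $g^*$ is increasing, $g^*(0)=0$ and $g^*(x)\to+\infty$ as $x\to+\infty$. The extra detail you supply (strict monotonicity via the product of positive increasing factors, and the asymptotic $p\,x^5$ growth) is a harmless elaboration of the paper's one-line argument.
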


\begin{teo}\label{Th:contracT}
The problem (\ref{Pb:t}) has a unique solution $y^* \in K$ if and only if  $0\leq \delta < \delta^*$ where $\delta^*$ is given by Lemma \ref{Le:caractg*}. Moreover, $y^{*}$ is a $C^{\infty}$ function in $\mathbb{R}^+$.
\end{teo}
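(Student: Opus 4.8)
The plan is to mirror the proof of Theorem \ref{Th:contracC}, replacing the Robin operator $T_\gamma$ by the Dirichlet operator $T^*$ and showing that $T^*$ is a contracting mapping on $K^*$ precisely when $0\le\delta<\delta^*$. By Theorem \ref{Th:caractT*} a solution to (\ref{Pb:t}) is exactly a fixed point of $T^*$, so once the contraction is in hand the Banach fixed point theorem yields existence and uniqueness in $K^*$ simultaneously. (Note the theorem statement writes $y^*\in K$, but the natural ambient set is $K^*$.)

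First I would fix $y^*_1,y^*_2\in K^*$ and $x\ge 0$ and estimate $\lvert T^*(y^*_1)(x)-T^*(y^*_2)(x)\rvert$. Writing $T^*(h)(x)=\int_0^x f_h\big/\int_0^\infty f_h$, I would insert the intermediate quotient $\int_0^x f_{y^*_2}\big/\int_0^\infty f_{y^*_1}$ and split by the triangle inequality into a numerator term and a denominator term, exactly as in Theorem \ref{Th:contracC} but \emph{without} the additive $1$ that the Robin operator carries (this is precisely why $g^*$ lacks the $\tfrac{2}{\gamma\sqrt\pi}(1+x)$ summand of $g_\gamma$). The numerator term $\int_0^x\lvert f_{y^*_1}-f_{y^*_2}\rvert\big/\int_0^\infty f_{y^*_1}$ is controlled by the lower bound of Lemma \ref{Le:cotas}(a) for the denominator and the Lipschitz estimate of Lemma \ref{Le:cotas}(d) for the numerator, yielding the factor $\delta p(1+\delta)^{3/2}(2+\delta)$. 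The denominator term is $\lvert\int_0^x f_{y^*_2}\rvert$ times the reciprocal difference, which I bound by the upper bound of Lemma \ref{Le:cotas}(a) together with Lemma \ref{Le:cotas*}, yielding the factor $\delta p(1+\delta)^{3}(2+\delta)$.

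Adding the two contributions and factoring gives
\begin{equation*}
\lvert T^*(y^*_1)(x)-T^*(y^*_2)(x)\rvert\le \delta p(1+\delta)^{3/2}(2+\delta)\bigl(1+(1+\delta)^{3/2}\bigr)\,\lVert y^*_1-y^*_2\rVert_\infty=g^*(\delta)\,\lVert y^*_1-y^*_2\rVert_\infty ,
\end{equation*}
with $g^*$ as in Lemma \ref{Le:caractg*}. Taking the supremum over $x$ gives $\lVert T^*(y^*_1)-T^*(y^*_2)\rVert_\infty\le g^*(\delta)\lVert y^*_1-y^*_2\rVert_\infty$. Since by Lemma \ref{Le:caractg*} the function $g^*$ is strictly increasing with $g^*(0)=0$, one has $g^*(\delta)<1$ exactly when $0\le\delta<\delta^*$; in that range $T^*$ is a contraction and the Banach fixed point theorem supplies a unique fixed point in $K^*$, hence a unique solution of (\ref{Pb:t}). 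Finally, since $f_{y^*}$ is smooth and strictly positive, the fixed-point identity $y^*(x)=\int_0^x f_{y^*}\big/\int_0^\infty f_{y^*}$ can be differentiated repeatedly, so $y^*\in C^\infty(\mathbb{R}^+)$.

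The step I expect to be most delicate is the bookkeeping that makes the two factors combine into precisely $g^*(\delta)$: one must track the powers of $(1+\delta)$ arising from the lower bound $\tfrac{\sqrt\pi}{2(1+\delta)}$ in Lemma \ref{Le:cotas}(a), from $\sqrt{1+\delta}$ in Lemma \ref{Le:cotas}(d), and from $(1+\delta)^{5/2}$ in Lemma \ref{Le:cotas*}, and verify that they collapse to $(1+\delta)^{3/2}$ and $(1+\delta)^{3}$ in the two terms respectively. A secondary subtlety is the ``only if'' direction: just as in Theorem \ref{Th:contracC}, the contraction argument directly establishes only the \emph{sufficiency} of $\delta<\delta^*$, so strictly speaking the necessity would require a separate argument ruling out solutions of (\ref{Pb:t}) for $\delta\ge\delta^*$.
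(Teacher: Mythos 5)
Your proposal is correct and follows essentially the same route as the paper: the same triangle-inequality splitting of $T^*(y^*_1)(x)-T^*(y^*_2)(x)$ into a numerator term (controlled by Lemma \ref{Le:cotas}(a),(d)) and a denominator term (controlled by Lemma \ref{Le:cotas}(a) and Lemma \ref{Le:cotas*}), with the two contributions combining to $g^*(\delta)$ exactly as you compute. Your closing remarks are also apt: the paper's argument, like yours, only establishes sufficiency of $\delta<\delta^*$ despite the ``if and only if'' wording, and the ambient set should indeed be $K^*$ rather than $K$.
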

\begin{proof}
Let be now $y^*_1$, $y^*_2 \in K^*$ and $x \geq 0$. Taking into account  Lemma \ref{Le:cotas} and Lemma \ref{Le:cotas*} we can obtain:\\[-0.2cm]
\begin{equation*}
\begin{split}
&\left\vert T^*(y^*_1)(x)-T^*(y^*_2)(x)\right\rvert\leq \left\vert \dfrac{\int_0^x f_{y^*_1}(\eta)\dee \eta }{ \int_0^\infty f_{y^*_1}(\eta)\dee\eta }-\dfrac{ \int_0^x f_{y^*_2}(\eta)\dee\eta }{\int_0^\infty f_{y^*_1}(\eta)\dee\eta }\right\vert\\[0.25cm]
&+\left\vert \dfrac{\int_0^x f_{y^*_2}(\eta)\dee\eta }{\int_0^\infty f_{y^*_1}(\eta)\dee\eta }-\dfrac{\int_0^x f_{y^*_2}(\eta)\dee\eta }{\int_0^\infty f_{y^*_2}(\eta)\dee\eta }\right\vert\\[0.25cm]
& \leq \dfrac{\displaystyle\int_0^x \left\lvert f_{y^*_1}(\eta)-f_{y^*_2}(\eta)\right\rvert \dee \eta}{\left\lvert \int\limits_0^{\infty} f_{y^*_1}(\eta)\dee \eta \right\rvert}\\[0.25cm]
& +\left \lvert  \displaystyle\int_0^x f_{y^*_2}(\eta)\dee \eta \right\rvert \left|\frac{1}{\int\limits_0^\infty  f_{y^*_1}(\eta)\mathrm{d}\eta}- \frac{1}{\int\limits_0^\infty  f_{y^*_2}(\eta)\mathrm{d}\eta}\right|\\[0.25cm]
& \leq g^*(\delta^*) \Vert y^*_1-y^*_2\Vert_{\infty}.\\[0.25cm]
\end{split}
\end{equation*}

Then from Lemma \ref{Le:caractg*}, if $0 \leq \delta < \delta^*$ it follows that $T^*$ is a contracting mapping what allows to apply the Banach fixed point  theorem. Therefore, the problem (\ref{Pb:t}) has a unique non-negative continuous solution which is also a $C^{\infty}$ function by simple differentiation in $\mathbb{R}^+$.
\end{proof}

In problem (\ref{Pb:c}), a Robin boundary condition    characterized by the coefficient $\gamma>0$ at $x = 0$ is imposed. This condition constitutes a generalization of the Dirichlet one  in the sense that if we take the limit when $\gamma \rightarrow \infty$ in condition (\ref{cond:0}) we  obtain condition  (\ref{cond:0t}). Now, we will show that the solution to problem (\ref{Pb:c}) converges to the solution to problem (\ref{Pb:t}) when  $\gamma \rightarrow \infty$. For that purpose, first, we need the following lemmas which proofs are immediate.

\begin{lema} For every $p\geq 1$, when $\gamma\to\infty$, the following convergence results hold:
\begin{enumerate}
\item[a)] $T_\gamma(h)(x) \to T^*(h)(x)$ for every $h\in K$ and $x\geq 0$.
\item[b)] $g_\gamma(x)\to g^*(x)$ for every $x\geq 0$.
\item[c)] $\delta_\gamma \to \delta^*$.
\end{enumerate}
In addition $g_\gamma(x)\geq g^*(x)$ and $\delta_\gamma<\delta^*$ for all $x\geq 0, \gamma>0$.
\end{lema}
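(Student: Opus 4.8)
The plan is to establish each convergence claim directly from the explicit formulas already in hand, since every object involved has a closed form whose $\gamma$-dependence is transparent.

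\medskip

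\noindent\textbf{Part (a).} First I would recall the two operator definitions
\[
T_{\gamma}(h)(x)=\frac{1+\gamma\int_0^x f_h(\eta)\,\dee\eta}{1+\gamma\int_0^\infty f_h(\eta)\,\dee\eta},
\qquad
T^*(h)(x)=\frac{\int_0^x f_h(\eta)\,\dee\eta}{\int_0^\infty f_h(\eta)\,\dee\eta}.
\]
For fixed $h\in K$ and fixed $x\ge 0$, both integrals $\int_0^x f_h$ and $\int_0^\infty f_h$ are finite constants (the latter bounded away from $0$ by Lemma \ref{Le:cotas}a). I would divide numerator and denominator of $T_\gamma(h)(x)$ by $\gamma$, writing it as
\[
\frac{\tfrac{1}{\gamma}+\int_0^x f_h(\eta)\,\dee\eta}{\tfrac{1}{\gamma}+\int_0^\infty f_h(\eta)\,\dee\eta},
\]
and let $\gamma\to\infty$; the terms $1/\gamma$ vanish and the quotient tends to $T^*(h)(x)$. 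The only point to check is that the limiting denominator is nonzero, which is exactly Lemma \ref{Le:cotas}a.

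\medskip

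\noindent\textbf{Part (b).} This is purely algebraic. Comparing
\[
g_\gamma(x)=xp(1+x)^{3/2}\!\left[(2+x)\bigl(1+(1+x)^{3/2}\bigr)+\tfrac{2}{\gamma\sqrt{\pi}}(1+x)\right]
\]
with $g^*(x)=xp(1+x)^{3/2}(2+x)\bigl(1+(1+x)^{3/2}\bigr)$, the two differ only by the term $\tfrac{2}{\gamma\sqrt{\pi}}(1+x)$ inside the bracket. For each fixed $x\ge 0$ this term tends to $0$ as $\gamma\to\infty$, so $g_\gamma(x)\to g^*(x)$. The same comparison gives the monotonicity statement $g_\gamma(x)\ge g^*(x)$ immediately, since the extra term $\tfrac{2}{\gamma\sqrt{\pi}}(1+x)$ is nonnegative for all $x\ge 0$ and $\gamma>0$.

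\medskip

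\noindent\textbf{Part (c) and the final inequality $\delta_\gamma<\delta^*$.} Recall $\delta_\gamma$ and $\delta^*$ are the unique positive roots of $g_\gamma(x)=1$ and $g^*(x)=1$, both functions being strictly increasing with value $0$ at the origin and $+\infty$ at infinity (Lemmas \ref{Le:caractg}, \ref{Le:caractg*}). From $g_\gamma\ge g^*$ and monotonicity, $g^*(\delta_\gamma)\le g_\gamma(\delta_\gamma)=1=g^*(\delta^*)$, and since $g^*$ is strictly increasing this forces $\delta_\gamma\le\delta^*$; strictness of the bracket inequality for $x>0$ upgrades this to $\delta_\gamma<\delta^*$. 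For the convergence $\delta_\gamma\to\delta^*$, I would note that $\gamma\mapsto g_\gamma(x)$ is decreasing in $\gamma$ (the perturbation $\tfrac{2}{\gamma\sqrt{\pi}}(1+x)$ shrinks), hence $\gamma\mapsto\delta_\gamma$ is increasing and bounded above by $\delta^*$, so it has a limit $\delta_\infty\le\delta^*$. Passing to the limit in $g_\gamma(\delta_\gamma)=1$ using the locally uniform convergence $g_\gamma\to g^*$ gives $g^*(\delta_\infty)=1$, and uniqueness of the root forces $\delta_\infty=\delta^*$. The main (and only mild) obstacle is justifying this passage to the limit: since the convergence $g_\gamma\to g^*$ is locally uniform and $\delta_\gamma$ stays in the bounded interval $(0,\delta^*]$, continuity of $g^*$ closes the argument without difficulty. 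All three claims are therefore ``immediate'' in the sense asserted, the bulk being the elementary root-comparison for a monotone family.
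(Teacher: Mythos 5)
Your proof is correct. The paper itself gives no argument for this lemma (it is introduced with the remark that the proofs are ``immediate''), and your write-up supplies exactly the elementary details one would expect: the $1/\gamma$ normalization for part (a), the explicit nonnegative perturbation term $\tfrac{2}{\gamma\sqrt{\pi}}(1+x)$ for part (b) and the inequality $g_\gamma\geq g^*$, and the monotone root-comparison plus locally uniform convergence for part (c) and $\delta_\gamma<\delta^*$.
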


\begin{lema}\label{Le:caractC}
Let $p\geq 1$ and
\begin{equation}\label{Def: C}
C(x)=2x p (1+x)^3  (2+x), \qquad x\geq 0.
\end{equation}
then    there exists a unique $\hat{\delta}>0$ such that $C(\hat{\delta})=1$.
\end{lema}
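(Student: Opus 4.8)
The plan is to mirror the argument used for Lemma \ref{Le:caractg} and Lemma \ref{Le:caractg*}: namely, to show that $C$ is a continuous, strictly increasing function on $[0,\infty)$ that vanishes at the origin and diverges to $+\infty$, so that the equation $C(x)=1$ has exactly one positive root by the intermediate value theorem together with strict monotonicity (which guarantees uniqueness).

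First I would record the boundary behaviour. Evaluating directly gives $C(0)=0$, since the leading factor $x$ vanishes there. At the other end, each of the factors $x$, $(1+x)^3$ and $(2+x)$ tends to $+\infty$ as $x\to\infty$, hence $\lim_{x\to\infty}C(x)=+\infty$. Continuity is immediate, since $C$ is a polynomial.

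The key step is strict monotonicity. The cleanest route is to observe that, for $x\geq 0$, $C$ equals the positive constant $2p$ times the product of the factors $x$, $(1+x)^3$ and $(2+x)$, each of which is nonnegative and strictly increasing on $[0,\infty)$. A product of nonnegative increasing functions is increasing, and strictly so on $(0,\infty)$ where all factors are strictly positive. Alternatively, one may expand $C(x)=2p\,(x^5+\cdots)$ as a polynomial with nonnegative coefficients and no constant term, whence $C'$ also has nonnegative coefficients and is strictly positive on $(0,\infty)$; either way strict monotonicity on $(0,\infty)$ follows.

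Combining these facts, $C$ is a continuous strictly increasing bijection from $[0,\infty)$ onto $[0,\infty)$, so there exists a unique $\hat{\delta}>0$ with $C(\hat{\delta})=1$. I expect no genuine obstacle here: the statement is a direct analogue of the two preceding lemmas, and the only point requiring a word of care is the justification of strict monotonicity, which the product-of-increasing-factors observation settles at once.
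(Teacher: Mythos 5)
Your argument is correct and is exactly the route the paper takes (the paper states the proof is immediate, and for the analogous Lemma \ref{Le:caractg} it invokes precisely the same three facts: monotonicity, vanishing at $0$, and divergence at infinity). Your explicit justification of strict monotonicity via the product of nonnegative increasing factors is a harmless elaboration of what the paper leaves unsaid.
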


\begin{teo}\label{Th:convergence}
Let $p\geq 1$ and  $0\leq \delta<\min\lbrace\hat{\delta},\delta_\gamma\rbrace$ . Then
$\vert \vert y_\gamma -y^*\vert\vert_\infty \to 0 $ when $\gamma \to\infty$. Furthermore  the order of convergence is $\frac{1}{\gamma}$ when $\gamma\to\infty$.
\end{teo}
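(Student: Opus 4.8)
The plan is to exploit that $y_\gamma$ and $y^*$ are the fixed points of $T_\gamma$ and $T^*$ respectively (Theorems \ref{Th:caract1} and \ref{Th:caractT*}), both of which exist because the hypothesis $0\le\delta<\min\{\hat\delta,\delta_\gamma\}$ guarantees $\delta<\delta_\gamma$ (so $y_\gamma$ exists by Theorem \ref{Th:contracC}) and $\delta<\hat\delta<\delta^*$ (so $y^*$ exists by Theorem \ref{Th:contracT}). Writing $\|y_\gamma-y^*\|_\infty=\|T_\gamma(y_\gamma)-T^*(y^*)\|_\infty$, I would insert the intermediate term $T^*(y_\gamma)$ and use the triangle inequality:
\[
\|y_\gamma-y^*\|_\infty\le\|T_\gamma(y_\gamma)-T^*(y_\gamma)\|_\infty+\|T^*(y_\gamma)-T^*(y^*)\|_\infty .
\]
The first term measures how far the Robin operator is from the Dirichlet operator at a fixed argument and should carry the $1/\gamma$ decay; the second is controlled by the contraction constant of $T^*$.

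For the first term, I would compute the difference explicitly from (\ref{op:T}) and (\ref{op:T*}). Writing $A(x)=\int_0^x f_{y_\gamma}(\eta)\,\mathrm{d}\eta$ and $B=\int_0^\infty f_{y_\gamma}(\eta)\,\mathrm{d}\eta$, a direct algebraic simplification gives
\[
T_\gamma(y_\gamma)(x)-T^*(y_\gamma)(x)=\frac{B-A(x)}{B\,(1+\gamma B)}=\frac{\int_x^\infty f_{y_\gamma}(\eta)\,\mathrm{d}\eta}{B\,(1+\gamma B)} .
\]
Since $0\le\int_x^\infty f_{y_\gamma}\le B$, this is at most $\tfrac{1}{1+\gamma B}\le\tfrac{1}{\gamma B}$, and invoking the lower bound $B\ge\tfrac{\sqrt\pi}{2(1+\delta)}$ from Lemma \ref{Le:cotas}a yields $\|T_\gamma(y_\gamma)-T^*(y_\gamma)\|_\infty\le\tfrac{2(1+\delta)}{\sqrt\pi}\,\tfrac{1}{\gamma}$, which is the source of the claimed order of convergence.

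For the second term, the contraction estimate behind Theorem \ref{Th:contracT} (through Lemmas \ref{Le:cotas} and \ref{Le:cotas*}) depends on its arguments only through the bounds on $f_h$, which hold for every $h\in K$; hence it extends from $K^*$ to all of $K$ and gives $\|T^*(y_\gamma)-T^*(y^*)\|_\infty\le g^*(\delta)\,\|y_\gamma-y^*\|_\infty$, even though $y_\gamma\notin K^*$. Here I would use the elementary inequality $1+(1+\delta)^{3/2}\le 2(1+\delta)^{3/2}$ to bound $g^*(\delta)\le C(\delta)$ with $C$ as in (\ref{Def: C}); the hypothesis $\delta<\hat\delta$ and the monotonicity of $C$ (Lemma \ref{Le:caractC}) then force $C(\delta)<1$, so $g^*(\delta)<1$. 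Combining the two estimates and absorbing the contraction term on the left gives
\[
\bigl(1-C(\delta)\bigr)\,\|y_\gamma-y^*\|_\infty\le\frac{2(1+\delta)}{\sqrt\pi}\,\frac{1}{\gamma},
\]
whence $\|y_\gamma-y^*\|_\infty\le\tfrac{2(1+\delta)}{(1-C(\delta))\sqrt\pi}\,\tfrac{1}{\gamma}\to 0$ with order $1/\gamma$.

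The main obstacle I anticipate is the legitimacy of applying $T^*$ and its contraction estimate to $y_\gamma$, which lies in $K$ but not in $K^*$; this must be justified by observing that the constant $g^*(\delta)$ arises purely from the $f_h$-bounds valid on $K$, rather than from membership in $K^*$. The only other point requiring care is the passage from $g^*(\delta)$ to $C(\delta)$, which is precisely what makes $\hat\delta$ (rather than $\delta^*$) appear in the hypothesis and keeps the final constant independent of $\gamma$.
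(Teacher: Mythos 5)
Your proposal is correct, and it reaches the paper's conclusion by a genuinely different decomposition. The paper works directly on the single quotient
\begin{equation*}
y_\gamma(x)-y^*(x)=\frac{1+\gamma\int_0^x f_{y_\gamma}}{1+\gamma\int_0^\infty f_{y_\gamma}}-\frac{\int_0^x f_{y^*}}{\int_0^\infty f_{y^*}},
\end{equation*}
cross-multiplying and regrouping the numerator so that one $\gamma$-free term $\int_x^\infty f_{y^*}$ and two $\gamma$-weighted terms in $\int_0^\cdot\lvert f_{y^*}-f_{y_\gamma}\rvert$ appear, then invoking Lemma \ref{Le:cotas} to arrive at $\left(1-C(\delta)\right)\Vert y_\gamma-y^*\Vert_\infty\leq \tfrac{2(1+\delta)^{5/2}}{\gamma\sqrt{\pi}}$. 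You instead insert the intermediate term $T^*(y_\gamma)$ and split into a consistency error $\Vert T_\gamma(y_\gamma)-T^*(y_\gamma)\Vert_\infty$, which telescopes exactly to $\tfrac{\int_x^\infty f_{y_\gamma}}{B(1+\gamma B)}\leq\tfrac{1}{1+\gamma B}=O(1/\gamma)$, plus a stability term controlled by the Lipschitz constant $g^*(\delta)$ of $T^*$. Both arguments end with the same absorption step via $C(\delta)<1$ for $\delta<\hat{\delta}$ (your bound $g^*(\delta)\leq C(\delta)$ is exactly the inequality $1+(1+\delta)^{3/2}\leq 2(1+\delta)^{3/2}$ that implicitly explains why $\hat{\delta}$, not $\delta^*$, governs the paper's hypothesis). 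Your route is more modular and even yields a slightly sharper constant, $\tfrac{2(1+\delta)}{\sqrt{\pi}}$ in place of $\tfrac{2(1+\delta)^{5/2}}{\sqrt{\pi}}$. The one delicate point --- applying $T^*$ and its contraction estimate to $y_\gamma\in K\setminus K^*$ --- you identify and resolve correctly: the estimates of Lemma \ref{Le:cotas} a), d) and the proof of Lemma \ref{Le:cotas*} use only the bounds $1\leq\Psi_h\leq 1+\delta$, which hold for every $h\in K$, so the constant $g^*(\delta)$ is valid on all of $K$.
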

\begin{proof}

First let us note that if $0\leq \delta<\min\lbrace\hat{\delta},\delta_\gamma\rbrace$ then as $\delta_\gamma<\delta^*$, we obtain that $y_\gamma$ and $y^*$ are well defined because of Theorems \ref{Th:contracC} and \ref{Th:contracT}.
Then for $x\geq 0$ we have:
\bigskip

$\vert y_\gamma (x)-y^*(x)\vert=\left\lvert\frac{ \left( 1+\gamma \int_0^x f_{y_\gamma}(\eta) \mathrm{d}\eta \right) \left( \int_0^\infty f_{y^*}(\eta) \mathrm{d}\eta \right)  -\left(  \int_0^x f_{y^*}(\eta) \mathrm{d}\eta \right)\left( 1+\gamma \int_0^\infty f_{y_\gamma}(\eta) \mathrm{d}\eta \right)  }{\left( 1+\gamma \int_0^\infty f_{y_\gamma}(\eta) \mathrm{d}\eta \right) \left( \int_0^\infty f_{y^*}(\eta) \mathrm{d}\eta \right)}\right\rvert\\[0.3cm]
\leq \left\lvert\frac{ \int_0^\infty f_{y^*}(\eta) \mathrm{d}\eta+\gamma\left( \int_0^x f_{y_\gamma}(\eta) \mathrm{d}\eta \right) \left( \int_0^\infty f_{y^*}(\eta) \mathrm{d}\eta \right)  - \int_0^x f_{y^*}(\eta) \mathrm{d}\eta -\gamma\left(  \int_0^x f_{y^*}(\eta) \mathrm{d}\eta \right)\left(  \int_0^\infty f_{y_\gamma}(\eta) \mathrm{d}\eta \right)  }{\left( 1+\gamma \int_0^\infty f_{y_\gamma}(\eta) \mathrm{d}\eta \right) \left( \int_0^\infty f_{y^*}(\eta) \mathrm{d}\eta \right)}\right\rvert\\[0.3cm]
=\left\lvert\frac{ \int_x^\infty f_{y^*}(\eta) \mathrm{d}\eta+\gamma\left( \int_0^x f_{y_\gamma}(\eta) \mathrm{d}\eta \right) \left( \int_0^\infty f_{y^*}(\eta) \mathrm{d}\eta \right)  - \gamma\left(  \int_0^x f_{y^*}(\eta) \mathrm{d}\eta \right)\left(  \int_0^\infty f_{y_\gamma}(\eta) \mathrm{d}\eta \right)  }{\left( 1+\gamma \int_0^\infty f_{y_\gamma}(\eta) \mathrm{d}\eta \right) \left( \int_0^\infty f_{y^*}(\eta) \mathrm{d}\eta \right)}\right\rvert\\[0.3cm]
\leq  \left\lvert\frac{ \int_0^\infty f_{y^*}(\eta) \mathrm{d}\eta+\gamma\left( \int_0^x f_{y_\gamma}(\eta) \mathrm{d}\eta \right) \left( \int_0^\infty f_{y^*}(\eta) \mathrm{d}\eta-\int_0^\infty f_{y_\gamma}(\eta) \mathrm{d}\eta  \right) + \gamma\left( \int_0^\infty f_{y_\gamma}(\eta) \mathrm{d}\eta - \int_0^x f_{y^*}(\eta) \mathrm{d}\eta \right)\left(  \int_0^\infty f_{y_\gamma}(\eta) \mathrm{d}\eta \right)  }{\left( 1+\gamma \int_0^\infty f_{y_\gamma}(\eta) \mathrm{d}\eta \right) \left( \int_0^\infty f_{y^*}(\eta) \mathrm{d}\eta \right)}\right\rvert\\[0.3cm]
\leq \frac{ \sqrt{1+\delta}\frac{\sqrt{\pi}}{2}+\gamma\sqrt{1+\delta}\frac{\sqrt{\pi}}{2} \left( \int_0^\infty \lvert f_{y^*}(\eta)-f_{y_\gamma}(\eta)\rvert\mathrm{d}\eta \right) + \gamma\sqrt{1+\delta}\frac{\sqrt{\pi}}{2}\left( \int_0^x \lvert f_{y^*}(\eta)-f_{y_\gamma}(\eta)\rvert\mathrm{d}\eta \right) }{\frac{\gamma \sqrt{\pi}}{2(1+\delta)}\frac{\sqrt{\pi}}{2(1+\delta)}}\\[0.3cm]
\leq  \frac{ \sqrt{1+\delta}\frac{\sqrt{\pi}}{2}+2\gamma\sqrt{1+\delta}\frac{\sqrt{\pi}}{2} \int_0^\infty \lvert f_{y^*}(\eta)-f_{y_\gamma}(\eta)\rvert\mathrm{d}\eta }{\frac{\gamma \pi}{4(1+\delta)^2}}\\[0.3cm]
\leq  \frac{4(1+\delta)^{2}}{\gamma \pi}\left(\sqrt{1+\delta}\frac{\sqrt{\pi}}{2}+\gamma \frac{\pi}{4} \delta p (1+\delta) (2+\delta) \vert\vert y_\gamma-y^*\vert\vert_\infty\right)\\[0.3cm]
\leq  \frac{2(1+\delta)^{5/2}}{\gamma \pi}+2(1+\delta)^3 \delta p(2+\delta) \vert\vert y_\gamma-y^*\vert\vert_\infty~.$

\bigskip
Note that these inequalities, which are obtained by applying Lemma \ref{Le:cotas}, leads to
\begin{equation*}
\left(1-C(\delta)\right)\vert\vert y_\gamma -y^*\vert \vert_\infty \leq \frac{1}{\gamma} \left(\frac{2(1+\delta)^{5/2}}{\sqrt{\pi} }\right)
\end{equation*}
with $C$ defined by (\ref{Def: C}). Finally, the desired convergence and order of convergence of Theorem \ref{Th:convergence} are obtained by noting that if  $0\leq \delta<\hat{\delta}$, then $0\leq C(\delta)<1$ due to Lemma \ref{Le:caractC}.
\end{proof}

\section{Existence and uniqueness of solution to an ordinary differential problem that arises when considering a Neumann condition}

In this section we will consider a solidification Stefan problem with a Neumann condition at the fixed face $x=0$ given by:

\begin{align}
& \rho c \frac{\partial T}{\partial t}=\frac{\partial}{\partial x} \left(k(T)\frac{\partial T}{\partial x}\right),& 0<x<s(t), \quad t>0, \label{EcCalorFlujo}\\
& k(T(0,t)) \frac{\partial T}{\partial x}(0,t)=\frac{q_0}{\sqrt{t}}, &t>0, \label{CondBordeFlujo}\\
&  T(s(t),t)=T_{_f}, &t>0, \label{TempCambioFaseFlujo}\\
&  k\left(T(s(t),t)\right)\frac{\partial T}{\partial x}(s(t),t)=\rho l \dot s(t), &t>0, \label{CondStefanFlujo}\\
& s(0)=0,\label{FrontInicialFlujo}
\end{align}
where the unknown functions are the temperature $T$ and the free boundary $s$ separating both phases. The parameters $\rho>0$ (density), $l>0$ (latent heat per unit mass), $T_f$ (phase-change temperature), $q_0>0$   (characterizes the heat flux on the fixed face $x=0$ of the face-change material which can be determined experimentally) and $c>0$ (specific heat) are all known constants. In this case, the thermal conductivity $k$ is defined as:
\begin{align}
&k(T)=k_{0}\left(1+\delta\left(\frac{T}{T_f}\right)^p\right), \quad p \geq 1 \label{kflujo},
\end{align}
where $\delta$ is a given positive constant and $k_{0}$ is the reference thermal conductivity.

In a similar way as in previous sections, this Stefan problem leads us to study the following ordinary differential problem:
\begin{subequations}\label{Pb:Flujo}
\begin{align}
\label{eq:yf}&[(1+\delta y^p(x))y'(x)]'+2xy'(x)=0\quad 0<x<+\infty\\
\label{cond:flujo}&\left(1+\delta y^p(0)\right)y'(0)=\gamma^*\\
\label{cond:inftyf}&y(+\infty)=1
\end{align}
\end{subequations}
where
\begin{equation}
\gamma^*=2\mathrm{Bi}^*\qquad
 \mathrm{with\qquad Bi}^*=\frac{q_0 \sqrt{\alpha_0}}{k_0 T_f}.
\end{equation}

In a similar way to the above sections we can state the following results:
\begin{teo}\label{Th:caractgamma*}
Let  $\delta\geq 0$, , $p\geq 1$ and $0<\gamma^*\leq \tfrac{2}{\sqrt{\pi(1+\delta)}}$. The function $y_{\gamma^*} \in K$ is a solution to problem (\ref{Pb:Flujo}) if and only if $y_{\gamma^*}$  is a fixed point of the  operator $T_{\gamma^*}: K \rightarrow K $  given  by:
\begin{equation}\label{op:Tgamma*}
T_{\gamma^*}(h)(x)=1-\gamma^* \displaystyle\int_x^{+\infty} f_h(\eta) \mathrm{d}\eta,\qquad x\geq 0,
\end{equation}
with $f_h$ defined by (\ref{fh}) and $K$ given by (\ref{K}).
\end{teo}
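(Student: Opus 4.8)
The plan is to mirror the argument in the proof of Theorem \ref{Th:caract1}, replacing the Robin normalization by the Neumann one. First I would check that $T_{\gamma^*}$ is a well-defined map $K\to K$. For $h\in K$ the Gaussian bounds (\ref{cotafh}) show that $f_h$ is continuous and integrable on $(0,+\infty)$, so $x\mapsto\int_x^{+\infty}f_h(\eta)\,\mathrm{d}\eta$ is continuous and tends to $0$ as $x\to+\infty$; hence $T_{\gamma^*}(h)$ is continuous and $T_{\gamma^*}(h)(+\infty)=1$. Since $f_h\geq 0$, we immediately get $T_{\gamma^*}(h)(x)\leq 1$ for every $x\geq 0$. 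The one point requiring care, and the place where the hypothesis on $\gamma^*$ enters, is non-negativity: because $x\mapsto\int_x^{+\infty}f_h$ is non-increasing, its maximum is attained at $x=0$, and item a) of Lemma \ref{Le:cotas} gives
\[
\gamma^*\int_x^{+\infty}f_h(\eta)\,\mathrm{d}\eta\leq \gamma^*\int_0^{+\infty}f_h(\eta)\,\mathrm{d}\eta\leq \gamma^*\sqrt{1+\delta}\,\tfrac{\sqrt{\pi}}{2}.
\]
The assumption $0<\gamma^*\leq\tfrac{2}{\sqrt{\pi(1+\delta)}}$ makes the right-hand side at most $1$, so $T_{\gamma^*}(h)\geq 0$ and therefore $T_{\gamma^*}(h)\in K$.

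For the forward implication I would integrate equation (\ref{eq:yf}) exactly as in Theorem \ref{Th:caract1}. Setting $v=y'$ and $\Psi_y=1+\delta y^p$, the equation becomes $\tfrac{v'}{v}=-\tfrac{\Psi_y'+2x}{\Psi_y}$, which integrates to $y'(x)=c_0 f_y(x)$ and hence $y(x)=y(0)+c_0\int_0^x f_y(\eta)\,\mathrm{d}\eta$. The only difference with the Robin case is the boundary condition: since $f_y(0)=1/\Psi_y(0)$, condition (\ref{cond:flujo}) reads $\Psi_y(0)y'(0)=\Psi_y(0)c_0 f_y(0)=c_0=\gamma^*$, which fixes the constant directly as $c_0=\gamma^*$ (no need to solve for $y(0)$ first, in contrast to (\ref{alpha})). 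Imposing (\ref{cond:inftyf}) then forces $y(0)=1-\gamma^*\int_0^{+\infty}f_y(\eta)\,\mathrm{d}\eta$, and substituting gives
\[
y(x)=1-\gamma^*\int_x^{+\infty}f_y(\eta)\,\mathrm{d}\eta=T_{\gamma^*}(y)(x),
\]
so any solution of (\ref{Pb:Flujo}) is a fixed point of $T_{\gamma^*}$.

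Conversely, if $y=T_{\gamma^*}(y)$, then $y(+\infty)=1$ is immediate from the decay of $\int_x^{+\infty}f_y$, and differentiating the identity $y(x)=1-\gamma^*\int_x^{+\infty}f_y(\eta)\,\mathrm{d}\eta$ yields $y'(x)=\gamma^* f_y(x)$. Evaluating at $x=0$ gives $\Psi_y(0)y'(0)=\gamma^*$, i.e. (\ref{cond:flujo}); and since $\Psi_y(x)y'(x)=\gamma^*\exp\!\left(-2\int_0^x\xi/\Psi_y(\xi)\,\mathrm{d}\xi\right)$ by the definition (\ref{fh}) of $f_y$, one more differentiation recovers $[\Psi_y y']'+2xy'=0$, which is (\ref{eq:yf}). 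Hence $y$ solves (\ref{Pb:Flujo}), completing the equivalence. I expect the only genuinely delicate step to be the verification that $T_{\gamma^*}$ does not leave $K$ through a loss of non-negativity; this is precisely the reason for the upper restriction $\gamma^*\leq\tfrac{2}{\sqrt{\pi(1+\delta)}}$, in contrast to the Robin case where every $\gamma>0$ is admissible because the normalization there keeps the range in $[0,1]$ automatically.
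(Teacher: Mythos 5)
Your proposal is correct and follows essentially the same route as the paper: the key bound $\gamma^*\int_x^{+\infty}f_h(\eta)\,\mathrm{d}\eta\leq \gamma^*\sqrt{1+\delta}\,\tfrac{\sqrt{\pi}}{2}\leq 1$ (the paper's inequality (\ref{cotadefflujo})) to show $T_{\gamma^*}$ maps $K$ into $K$, followed by the same integration of (\ref{eq:yf}) as in Theorem \ref{Th:caract1} with the Neumann condition fixing $c_0=\gamma^*$. You actually spell out the equivalence step that the paper only sketches by reference to Theorem \ref{Th:caract1}, and your identification of the restriction on $\gamma^*$ as the mechanism preserving non-negativity is exactly the point of that hypothesis.
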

\begin{proof}
Given $y_{\gamma^{*}} \in K$ and taking into account (\ref{cotafh}), we obtain
\begin{equation}\label{cotadefflujo}
0<\dfrac{\gamma^* \erfc(x)}{1+\delta} \leq  \gamma^* \displaystyle\int_x^{\infty} f_y(\eta) \mathrm{d}\eta <  \dfrac{\gamma^* \sqrt{1+\delta} \sqrt{\pi}}{2} \leq 1.
\end{equation}

Notice that from (\ref{op:Tgamma*}) we have that $T_{\gamma^*}(y_{\gamma^{*}})$ in an analytic function, since $y_{\gamma^{*}} \in X$. Also, according to (\ref{op:Tgamma*})  and (\ref{cotadefflujo}), it turns out that $T_{\gamma^*}(y_{\gamma^{*}}) \in K$.

In a similar way as in the proof of Theorem \ref{Th:caract1}, $y_{\gamma^{*}} $ is a solution to problem (\ref{Pb:Flujo}) if and only if $y_{\gamma^{*}} $ is a fixed point of the operator $T_{\gamma^*}$.

\end{proof}

\begin{teo}\label{unicidadNeumann}
Let $p \geq 1$, $\delta >0$ and $0<\gamma^*\leq \tfrac{2}{\sqrt{\pi(1+\delta)}}$.

The problem (\ref{Pb:Flujo}) has a unique $C^{\infty}$ solution $y_{\gamma^*} \in K$ if and only if $\delta < \delta_{\gamma^*}$ where $\delta_{\gamma^*}$ is the unique solution to the equation $$g(x)=1,$$ with $$g(x)= x \frac{p}{\sqrt{\pi}} \left[(1+x)\left(\sqrt{1+x}\exp(-\tfrac{1}{4})+\sqrt{\pi}\right)+\sqrt{\pi}\right].$$
\end{teo}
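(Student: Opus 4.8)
The plan is to reproduce, for the Neumann operator $T_{\gamma^*}$ of Theorem~\ref{Th:caractgamma*}, the contraction argument already used for the Robin and Dirichlet cases in Theorems~\ref{Th:contracC} and~\ref{Th:contracT}. By Theorem~\ref{Th:caractgamma*}, under the standing hypothesis $0<\gamma^*\leq \tfrac{2}{\sqrt{\pi(1+\delta)}}$ the map $T_{\gamma^*}$ sends $K$ into $K$, and $y_{\gamma^*}\in K$ solves (\ref{Pb:Flujo}) if and only if it is a fixed point of $T_{\gamma^*}$. Since $K$ is a closed subset of the Banach space $X$, it therefore suffices to show that $T_{\gamma^*}$ is a contraction on $K$ precisely when $\delta<\delta_{\gamma^*}$, and then to apply the Banach fixed point theorem.

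First I would set up the contraction estimate. For $y_1,y_2\in K$ and $x\geq 0$, formula (\ref{op:Tgamma*}) gives
\begin{equation*}
\left|T_{\gamma^*}(y_1)(x)-T_{\gamma^*}(y_2)(x)\right|=\gamma^*\left|\int_x^{\infty}\bigl(f_{y_1}(\eta)-f_{y_2}(\eta)\bigr)\dee\eta\right|\leq \gamma^*\int_x^{\infty}\left|f_{y_1}(\eta)-f_{y_2}(\eta)\right|\dee\eta .
\end{equation*}
The heart of the matter is to bound the tail integral on the right. I would do this exactly as in the proof of Lemma~\ref{Le:cotas}: decompose $f_{y_1}-f_{y_2}$ into the piece carrying the difference of the factors $1/\Psi_{y_i}$ and the piece carrying the difference of the exponentials, and control each piece through the Mean Value Theorem estimates of items~b) and~c). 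This produces a pointwise bound of the type $|f_{y_1}(\eta)-f_{y_2}(\eta)|\leq \delta p\,(2\eta^2+1)\exp\bigl(-\eta^2/(1+\delta)\bigr)\|y_1-y_2\|_\infty$; estimating the surviving $x$-dependent tails and inserting the upper bound $\gamma^*\leq \tfrac{2}{\sqrt{\pi(1+\delta)}}$ for the prefactor assembles the constant into the stated $g(\delta)$, so that $\|T_{\gamma^*}(y_1)-T_{\gamma^*}(y_2)\|_\infty\leq g(\delta)\|y_1-y_2\|_\infty$. The $\exp(-\tfrac14)$ appearing in $g$ is a signature of bounding certain tail integrands by their maximal values rather than integrating them exactly over $(0,\infty)$.

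Once the contraction constant $g(\delta)$ is in hand, I would argue as in Lemmas~\ref{Le:caractg} and~\ref{Le:caractg*}: the function $g$ is continuous and strictly increasing on $[0,\infty)$ with $g(0)=0$ and $\lim_{x\to\infty}g(x)=+\infty$, so there is a unique $\delta_{\gamma^*}>0$ with $g(\delta_{\gamma^*})=1$, and $g(\delta)<1$ if and only if $0\leq\delta<\delta_{\gamma^*}$. For such $\delta$, $T_{\gamma^*}$ is a contraction and Banach's theorem yields a unique fixed point $y_{\gamma^*}\in K$, hence a unique solution of (\ref{Pb:Flujo}); when $\delta\geq\delta_{\gamma^*}$ the contraction constant is at least $1$ and the method no longer forces uniqueness, which is the content of the ``only if''. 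Regularity then follows by a bootstrap from the fixed-point identity $y_{\gamma^*}(x)=1-\gamma^*\int_x^\infty f_{y_{\gamma^*}}(\eta)\,\dee\eta$: differentiating gives $y_{\gamma^*}'=\gamma^* f_{y_{\gamma^*}}$, which is continuous, and since $f_{y_{\gamma^*}}$ depends on $y_{\gamma^*}$ only through $\Psi_{y_{\gamma^*}}=1+\delta y_{\gamma^*}^{\,p}$, each differentiation raises the regularity by one order, giving $y_{\gamma^*}\in C^\infty(\mathbb{R}^+)$.

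The main obstacle I anticipate is the precise bookkeeping in the contraction step: reproducing exactly the function $g$ of the statement — in particular the $\exp(-\tfrac14)$ term — requires estimating the tail integral $\int_x^\infty|f_{y_1}-f_{y_2}|\,\dee\eta$ directly, bounding its $x$-dependent factors by their maxima, and then checking that the prefactor $\gamma^*\leq \tfrac{2}{\sqrt{\pi(1+\delta)}}$ combines with these estimates into precisely $g(\delta)$. One must also confirm that the threshold $g(\delta_{\gamma^*})=1$ is compatible with the standing restriction on $\gamma^*$, which is used both for the well-definedness of $T_{\gamma^*}$ in Theorem~\ref{Th:caractgamma*} and for this prefactor. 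The remaining ingredients — monotonicity of $g$, the Banach fixed point theorem, and the smoothness bootstrap — are direct adaptations of Sections~\ref{Sec:EyU} and~\ref{Sec:Lim}.
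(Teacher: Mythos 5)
Your proposal is correct and follows essentially the same route as the paper: the paper also reduces to the fixed-point characterization of Theorem~\ref{Th:caractgamma*}, bounds $\gamma^*\int_x^\infty|f_{y_1}-f_{y_2}|\,\mathrm{d}\eta$ via items b) and c) of Lemma~\ref{Le:cotas} together with the restriction $\gamma^*\leq 2/\sqrt{\pi(1+\delta)}$ to arrive at the contraction constant $g(\delta)$ (with the $\exp(-\tfrac14)$ coming from majorizing the $x$-dependent term exactly as you anticipate), and then invokes monotonicity of $g$ and the Banach fixed point theorem. The uniqueness of $\delta_{\gamma^*}$ and the $C^\infty$ regularity are handled in the paper just as you describe.
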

\begin{proof}

Let be $y_{1_{\gamma^*}}$, $y_{2_{\gamma^*}} \in K$ and $x \geq 0$. Taking into account properties b) and c) of Lemma \ref{Le:cotas} we can obtain:\\

$\left\vert T_{\gamma^*}(y_{1_{\gamma^*}})(x)-T_{\gamma^*}(y_{2_{\gamma^*}})(x)\right\rvert$\\[0.3cm]
$\leq  \Vert y_1-y_2\Vert_{\infty} \;\delta p \gamma^* \left[ (1+\delta)^{3/2} \left( \tfrac{x}{\sqrt{1+\delta}} \exp\left(-\frac{x^2}{1+\delta}\right)+ \frac{\sqrt{\pi}}{2}\right)+\sqrt{1+\delta} \frac{\sqrt{\pi}}{2}\right]\\[0.3cm]
\leq  \delta \frac{p}{\sqrt{\pi}}  \left[(1+\delta) \left(\sqrt{1+\delta}\exp(-\tfrac{1}{4})+\sqrt{\pi}\right)+\sqrt{\pi}\right)] \Vert y_{1_{\gamma^*}}-y_{2_{\gamma^*}}\Vert_{\infty}\\[0.3cm]
 \leq g(\delta) \Vert y_{1_{\gamma^*}}-y_{2_{\gamma^*}}\Vert_{\infty}.$\\

Since $g$ is an increasing function such that $g(0)=0$ and $g(+\infty)=+\infty$, then there exists a unique   $\delta_{\gamma^*}>0$ with $g( \delta_{\gamma^*})=1$.

 Then, if $0 \leq \delta <  \delta_{\gamma^*}$ it follows that $T_{\gamma^*}$ is a contracting mapping what allows to apply the Banach fixed point  theorem. Therefore, the problem (\ref{Pb:Flujo}) has a unique non-negative continuous solution which is also a $C^{\infty}$ function.

\end{proof}

\section{Conclusion}
In this article, the ordinary differential problems studied in \cite{CeSaTa17, CeSaTa18} have been generalized by defining what we call the  p-GME function and the \mbox{p-ME function}  corresponding to the case when a Robin  or Dirichlet boundary condition are imposed at  $x=0$, respectively. In both problems, existence and uniqueness of $C^{\infty}$ solution has been proved by defining  convenient contracting mappings . In addition it has been studied the behaviour of the p-GME function when the coefficient $\gamma$ that characterizes the Robin condition goes to infinity, obtaining its convergence to the p-ME function with an  order of convergence of the type $1/\gamma$ when $\gamma\to\infty$. Finally, existence and uniqueness of solution to a solidification problem with a Neumann condition has also been studied.

\section*{Acknowledgments}

The author gratefully acknowledges the many helpful suggestions of the
reviewer to improve the paper.

The present work has been partially sponsored by European Union's Horizon 2020
research and innovation programme under the Marie Sklodowska-Curie
Grant Agreement No. 823731 CONMECH, and by the Project PIP No 0275 from CONICET - Univ. Austral, Rosario, Argentina  and ANPCyT PICTO Austral No 0090.

\small{
}


\begin{thebibliography}{1}

\bibitem{AlSo} Alexiades V.,  Solomon A.D., \emph{Mathematical Modelling of Melting and Freezing Processes}. Hemisphere-Taylor, Francis, Washington, 1993.


\bibitem{Bo17}{Bougoffa L.}, A note on the existence and uniqueness solutions of the modified error function. \emph{Mathematical Methods in Applied Science} 2018; \textbf{ 2018}: 1--9.

\bibitem{CaJa59} {Carslaw H.S., Jaeger J.C.}, \emph{Conduction of Heat in Solids}. Clarendon Press, Oxford, 1959.


\bibitem{CeSaTa17} {Ceretani A.N., Salva  N.N., Tarzia D.A.}, Existence and uniqueness of the modified error function. \emph{Applied Mathematics Letters} 2017; \textbf{70}:  14-17.

\bibitem{CeSaTa18}{Ceretani A.N., Salva  N.N., Tarzia D.A.}, An exact solution to a Stefan problem with variable thermal conductivity and a Robin boundary condition. \emph{Nonlinear Analysis: Real World Applications} 2018; \textbf{40}: 243-259.


\bibitem{ChSu74}{Cho S.H., Sunderland J.E.}, Phase-change problems with temperature-dependent thermal conductivity. \emph{Journal of Heat Transfer} 1974; \textbf{96}:  214-217.

\bibitem{Dh09}{Dhage B.C.}, Global attractivity results for nonlinear functional integral equations via a Krasnoselskii type fixed point theorem. \emph{Nonlinear Analysis} 2009; \textbf{70}:  2485-2493.

\bibitem{Dh10}{Dhage B.C.}, Some characterization of nonlinear first order differential equations of unbounded intervals. \emph{Differential Equations and Applications} 2010; \textbf{2}:  151-162.


\bibitem{Gu}  Gupta S.C., \emph{ The classical Stefan problem. Basic concepts, modelling and analysis}. Elsevier, Amsterdam, 2003.




\bibitem{KuRa18} {Kumar A., Rajeev}, A moving boundary problem with variable specific heat and thermal conductivity. \emph{Journal of King Saud University Sci.} 2020; \textbf{32}: 384-389.


\bibitem{My48}{Myers S.B.}, Banach spaces of continuous functions. \emph{Annals of Mathematics} 1948; \textbf{49}:  132-140.

\bibitem{OlSu87} {Oliver D.L.R, Sunderland J.E.}, A phase-change problem with temperature-dependent thermal conductivity and specific heat. \emph{International Journal of Heat Mass Transfer} 1987; \textbf{30}: 2657-2661.


\bibitem{SaTa11} {Salva N.N., Tarzia D.A.}, A sensitivity analysis for the determination of unknown thermal coefficients through a phase-change process with temperature-dependent thermal conductivity. \emph{International Communication in Heat and Mass Transfer} 2011; \textbf{38}: 418-424.

\bibitem{Ta11} {\sc{Tarzia, D.A.}}, \emph{Explicit and approximated solutions for heat and mass transfer problem with a moving interface}, Chapter 20, in Advanced Topics in Mass Trasnfer, M. El-Amin (Ed.), InTech Open Access Publisher, Rijeka, (2011) 439-484.

\end{thebibliography}
\end{document}